\documentclass[final,3p,nopreprintline]{elsarticle}

\usepackage[utf8]{inputenc}
\usepackage[english]{babel}
\usepackage{amsmath,amssymb,amsthm,mathtools,array}
\usepackage{upgreek}
\usepackage{enumitem}
\usepackage{hyperref} 
\usepackage[capitalize]{cleveref} 
\usepackage{ifthen}

\hypersetup{colorlinks=true}

\newtheorem{lemma}{Lemma}
\crefname{lemma}{Lemma}{Lemmas}
\newtheorem{proposition}[lemma]{Proposition}
\crefname{proposition}{Proposition}{Propositions}
\newtheorem{remark}{Remark}
\crefname{remark}{Remark}{Remarks}

\newcommand*{\N}{\mathbb{N}}
\newcommand*{\R}{\mathbb{R}}

\newcommand*{\Hilbert}{\mathcal{H}}
\newcommand*{\Manifold}{\mathcal{M}_\Phi}
\newcommand*{\ParameterDomain}{\mathcal{Z}}
\newcommand*{\TangentSpace}[1][]{\mathcal{T}\ifthenelse{\equal{#1}{}}{}{_{#1}}\Manifold}
\newcommand*{\ConfigurationDomain}{\mathcal{D}}
\newcommand*{\Domain}{\Omega}
\newcommand*{\dm}{D}
\renewcommand{\vec}[1]{\boldsymbol{#1}}
\newcommand*{\tensor}[1]{\boldsymbol{\mathbf{#1}}}
\newcommand*{\Id}{\tensor{I}}
\newcommand*{\Flow}{\tensor{\Phi}}
\providecommand*{\tr}{\operatorname{tr}}
\newcommand*{\Transpose}{^\top}
\newcommand*{\PDiff}[1]{\partial_{#1}}
\newcommand*{\Diff}[1][]{\mathrm{d}#1}
\newcommand*{\Grad}[1][]{\nabla#1}
\newcommand*{\Div}[1][]{\nabla#1\cdot}
\newcommand*{\Laplace}[1][]{\Delta#1}
\newcommand*{\GradX}{\Grad[_{\!\Position}]}
\newcommand*{\GradHatQ}{\Grad[_{\!\hat\Configuration}]}
\newcommand*{\GradQ}{\Grad[_{\!\Configuration}]}
\newcommand*{\DivX}{\Div[_{\!\Position}]}
\newcommand*{\DivHatQ}{\Div[_{\!\hat\Configuration}]}
\newcommand*{\DivQ}{\Div[_{\!\Configuration}]}
\providecommand\given{}
\newcommand\SetSymbol[1][]{\nonscript\:#1\vert\allowbreak\nonscript\:\mathopen{}}
\DeclarePairedDelimiterX{\Set}[1]{\{}{\}}{\renewcommand{\given}{\SetSymbol[\delimsize]}#1}
\newcommand*{\Matsym}[1][\dm]{\R^{#1\times #1}_{\mathrm sym}}
\newcommand*{\Configuration}{\vec{q}}
\newcommand*{\ExtraStress}{\tensor{\uptau}}
\newcommand*{\FPpdf}{\psi}
\newcommand*{\Position}{\vec{x}}
\newcommand*{\Pressure}{p}
\newcommand*{\RouseMatrix}{\tensor{R}}
\newcommand*{\Strain}{\tensor{\upvarepsilon}}
\newcommand*{\Stress}{\tensor{\upsigma}}
\newcommand*{\Time}{t}
\newcommand*{\Velocity}{\vec{u}}
\newcommand*{\Parameter}{z}
\newcommand*{\CenterDiffusion}{\varepsilon}
\newcommand*{\Deborah}{\mathrm{De}}
\newcommand*{\Force}{\vec{F}}
\newcommand*{\Reynolds}{\mathrm{Re}}

\newcommand*{\Conformation}{\tensor{C}}
\newcommand*{\Covariance}{\tensor{\mathcal{C}}}
\newcommand*{\D}{\mathcal{D}_A}

\begin{document}

\begin{frontmatter}


\title{An equivalence of moment closure and nonlinear variational approximation of the Fokker--Planck equation for dilute polymeric flow}

\author{Caroline Lasser}
\ead{classer@tum.de}
\ead[url]{https://orcid.org/0000-0001-7272-2510}

\author{Stephan B. Lunowa\corref{cor1}}
\ead[url]{https://orcid.org/0000-0002-5214-7245}
\ead{stephan.lunowa@tum.de}
\cortext[cor1]{Corresponding author}

\author{Barbara Wohlmuth}
\ead{wohlmuth@tum.de}
\ead[url]{https://orcid.org/0000-0001-6908-6015}

\affiliation{organization={Technical University of Munich, School of Computation, Information and Technology, Department of Mathematics},
            addressline={Boltzmannstraße~3}, 
            city={D-85748 Garching},
            country={Germany}}

\begin{abstract}
We establish the equivalence between a classical moment closure and a nonlinear variational approximation of the Fokker--Planck equation for dilute polymeric flow in the linearized Hookean spring chain setting. The variational formulation is based on the Dirac--Frankel principle applied to a Gaussian approximation manifold endowed with the Fisher--Rao information metric. We show that the invariance of this manifold under the linear configurational dynamics yields an exact evolution for the macroscopic conformation tensor, recovering the classical diffusive Oldroyd-B closure. While the equivalence only holds in the linearized setting, the associated variational framework provides an abstract error representation. Thus it can serve in future work as a starting point for the systematic construction of reduced approximation schemes for polymeric flows with nonlinear forcing laws.
\end{abstract}


\begin{keyword}
    Nonlinear approximation \sep
    Dirac--Frenkel principle \sep
    Fokker--Planck equation \sep
    dilute polymeric flow

    \MSC[2020] 37L65 \sep 58E30 \sep 35Q84 \sep 76D05 
\end{keyword}

\end{frontmatter}

\section{Introduction}
\label{sec:intro}
The dynamics of dilute polymeric fluids are modeled through a coupled system of partial differential equations, where the macroscopic velocity field is linked to the microscopic configuration of polymer chains via a Fokker--Planck equation \cite{Bird1987}.
This equation describes the evolution of a probability density in configuration space, driven by deterministic drift terms representing polymer stretching and relaxation (induced by the flow and internal elasticity) as well as diffusion terms modeling thermal fluctuations.
The Fokker--Planck equation is coupled to the Navier--Stokes equations through the polymeric extra stress tensor, which is expressed as a moment of the probability density.
This coupling gives rise to a nonlinear feedback resulting in visco-elastic behavior.

Besides various analytical challenges, the direct numerical approximation is highly demanding as the Fokker--Planck equation for a linear chain of $N$ segments is formulated over the Cartesian product of $N+1$ domains of dimension $d$ (the dimension of physical space).
Moreover, in many applications, we are not interested in the detailed behavior of the microscopic probability density, but 
in the macroscopic quantities of the flow.
Hence, deriving macroscopic visco-elastic models as asymptotic limits of the kinetic description has been of big interest throughout the last decades.
Such closure relations have been derived mostly using moment methods combined with either ad-hoc assumptions on the structure or using quasi-equilibrium conditions, see, e.g., \cite{Gorban2001,Ilg2002,Hyon2008} and the references therein.
Only in the case of the  Hookean  linear chain model for polymer molecules, an exact closure model for the extra stress can be derived, resulting in the (diffusive) Oldroyd-B model \cite{Bird1987,Debiec2025}.
This closure is typically obtained by taking second moments of the Fokker--Planck equation or, alternatively, by a Hermite spectral approximation which results in a small lower triangular matrix \cite{Hetland2023,Beddrich2024,Beddrich2024DragReduction}.

In this article, we discuss an alternative approach using nonlinear variational approximation in a Gaussian manifold.
In contrast to linear variational approximations, such as, e.g., classical finite elements, one seeks for a nonlinear parameterization that improves the accuracy while preserving the underlying variational structure.
Here, this approach is based on the observation that Fokker--Planck equations are gradient flows with respect to the Wasserstein metric on probability density functions \cite{Jodan1998}, as also discussed in \cite{zhang2024Nonlinpar} regarding different metrics.
In particular, the Fisher--Rao information metric is diffeomorphism invariant and thus the most natural choice, cf. also
\cite{Anderson2024,Bruna2024NeuralGalerkin,Chen2024}, where similar parametrization are discussed in the context of reduced-order approximation of stochastic problems and in the context of neural-network training after discretization in time.

Combining these ideas, we show in this article that the moment closure and the nonlinear variational approach are equivalent in the sense that they result in the same macroscopic approximation for the Hookean chain model and give rise to an exact representation of the Navier--Stokes system.
Moreover, the variational approach allows us to derive an error estimate for the probability distribution.
To establish these results, the article is organized as follows.
In the following \cref{sec:model}, we introduce the Fokker--Planck equation for dilute polymeric flow and discuss the macroscopic approximation obtained by the moment closure.
In \cref{sec:ansatz}, we introduce the nonlinear variational approach, and discuss the specific choice of a Gaussian manifold in \cref{sec:hookean}, in particular, focusing on the resulting macroscopic closure and the error representation.
Finally, \cref{sec:conclusion} closes with a summary and an outlook to dilute polymer models with nonlinear forces.

\section{Governing equations}
\label{sec:model}
We denote the time with $t \in (0, T)$, the position in the Lipschitz-bounded macroscopic flow domain with $\Position \in \Domain \subset \R^d$, $d = 2,3$.
The velocity $\Velocity(\Time, \Position)$ and pressure $\Pressure(\Time, \Position)$ of the solvent fluid are governed by the incompressible Navier--Stokes equations
\begin{align}
    \label{eq:NS}
    \PDiff{\Time} \Velocity + \left( \Velocity \cdot \GradX \right) \Velocity &= \DivX \Stress, \qquad
    \DivX \Velocity = 0 \qquad
    \text{in } (0,T) \times \Domain,
\end{align}
where the stress $\Stress$ consists of the Newtonian part and an extra-stress $\ExtraStress$, viz.
\begin{align}
    \label{eq:NS:stress}
    \Stress &= \frac{\beta}{\Reynolds} \Strain(\Velocity) - \Pressure \Id_d + \frac{1 - \beta}{\Reynolds\,\Deborah} \ExtraStress, \qquad
    \Strain(\Velocity) = \frac{1}{2} \left( \GradX\Velocity + \left(\GradX\Velocity\right)\Transpose \right)
\end{align}
Here, $\Reynolds$ denotes the Reynolds number, $\Deborah$ the Deborah number and $\beta = \eta_s / (\eta_s + \eta_p)$ the viscosity ratio of the solvent viscosity $\eta_s$ and the zero-shear-rate polymeric viscosity $\eta_p$. Finally, $\Id_d$ is the identity matrix on $\R^d$.
For simplicity, we consider homogeneous Dirichlet boundary conditions $\Velocity |_{(0,T) \times \partial\Domain} = \vec{0}$ and a smooth, divergence-free initial condition $\Velocity|_{\Time = 0} = \Velocity_0$, which satisfies the boundary condition.

On the microscopic scale, we model a single polymer molecule as a freely jointed linear bead-spring chain, the so-called Rouse model \cite[Chp.~15]{Bird1987}.
This chain consists of $N \in \N$ (identical) Hookean springs which connect $N+1$ identical mass-less beads at positions $\vec{r}_n \in \R^d$, $n=0,\dots,N$.
We identify the center of each molecule with the spatial coordinate
\[ \Position = \frac{1}{N+1}\sum_{n=0}^N \vec{r}_n \]
and denote the distance vectors $\hat{\Configuration}_n = \vec{r}_n - \vec{r}_{n-1}$, $n=1,\dots,N$, while the full vector is denoted $\hat{\Configuration} = (\hat\Configuration_1, \dots, \hat\Configuration_N)\Transpose \in \ConfigurationDomain = \R^{\dm}$ with $\dm = Nd$.
Furthermore, the probability density function (PDF) $\hat\FPpdf(\Time, \Position, \hat\Configuration)$ expresses the probability that at time $\Time$, there is a polymer molecule with center-of-position $\Position$ and configuration vector $\hat\Configuration$.

We assume that the polymer dynamics in terms of the PDF $\hat\FPpdf$ are described by the following Fokker--Planck (FP) equation (cf. \cite[eq.~(15.1-7)]{Bird1987})
\begin{equation}\label{eq:FP}
\PDiff{\Time} \hat\FPpdf + \mathcal{L}_{\Position} \hat\FPpdf
    + \mathcal{L}_{\hat\Configuration}\hat\FPpdf  = 0 ,
    \qquad \text{in } (0,T) \times \Domain \times \ConfigurationDomain, 
\end{equation}
with spatial and configurational differential operators
\begin{align*}
    \mathcal{L}_{\Position}\hat\FPpdf &:= \Velocity \cdot \GradX \hat\FPpdf - \CenterDiffusion \Laplace_{\Position} \hat\FPpdf, \\
    \mathcal{L}_{\hat\Configuration}\hat\FPpdf &:= \DivHatQ \Big((\Id_N \otimes \GradX\Velocity) \hat\FPpdf \hat\Configuration - \frac{1}{2\Deborah} (\RouseMatrix \otimes \Id_d) \big(\hat\FPpdf \Force(\hat\Configuration) + \GradHatQ \hat\FPpdf\big) \Big).
\end{align*}
Here $\CenterDiffusion>0$ denotes the center-position diffusion coefficient, and the entropic spring force $\Force(\hat\Configuration)$ is in our case given by a normalized linear Hookean model $\Force(\hat\Configuration) = \hat\Configuration$. For the special case of a oriented linear chain,  the symmetric and positive definite Rouse matrix is given by $\RouseMatrix = \mathrm{tridiag}(-1, 2, -1) \in \R^{N \times N}$, see \cite[Chp.~15]{Bird1987}.

\begin{remark}
    In more general chain configurations, the chain can be represented as a connected graph where the number of edges is possibly much larger than the number of vertices and different properties can be attributed to the beads, see, e.g., block co-polymers.
    Then, the Fokker--Planck equation must be based on the positions of the beads, so that the $\RouseMatrix$ reflects the associated graph Laplacian, while the force term $\Force$ involves the incidence matrix of beads connected by a spring.
    Moreover, it is possible to take hydrodynamic self-interaction into account. The first order approximation leads to the Zimm model, in which the Rouse matrix is augmented by nonlinear drag terms, see \cite{Zimm56}.
\end{remark}

\begin{remark}
    Alternatively to the Hookean force, finitely extensible nonlinear elastic (FENE) models exist, which only allows for finite distance vectors $\hat\Configuration_n$ within an open ball, while the spring force tends to infinity as the boundary of the configuration space is approached, i.e.,
    \begin{equation*}
        \Force_n(\hat\Configuration_n) = \frac{\hat\Configuration_n}{1 - | \hat\Configuration_n|^2 / q_{\max}^2}, \qquad\hat\Configuration_n \in \Set*{ \vec{v} \in \R^d \given |\vec{v}| < q_{\max}},
    \end{equation*}
    where $q_{\max} > 0$ denotes the maximal distance between two consecutive beads and $\Force_n$ is the vector of force components with respect to the distance vector $\hat\Configuration_n$.
    As $q_{\max}$ tends to infinity, one recovers the linear Hookean spring force \cite{Warner1972}.
    More elaborate FENE models exist, see e.g. \cite{Jedynak2015}.
\end{remark}

To ensure mass conservation, we consider homogeneous Neumann boundary conditions for \cref{eq:FP} on the boundary of the macroscopic domain $\Omega$, i.e.,
$\GradX\hat\FPpdf \cdot \vec{n} |_{(0,T) \times \partial\Domain \times\ConfigurationDomain} = 0$  with $\vec{n}$ being the outer unit normal vector to $\partial\Domain$, and a smooth, nonnegative initial condition $\hat\FPpdf|_{\Time = 0} = \hat\FPpdf_0$, which satisfies the boundary condition, symmetry in $\Configuration$, i.e., $\hat\FPpdf_0(\Position,\hat\Configuration) = \hat\FPpdf_0(\Position,-\hat\Configuration)$, and $\int_{\ConfigurationDomain} \hat\FPpdf_0(\Position,\hat\Configuration) \,\Diff[\hat\Configuration] = 1$ everywhere in $\Domain$.

Finally, the probability density $\hat\FPpdf$ enters in the extra-stress tensor $\ExtraStress\in\Matsym[d]$ in the NS equations, in the form of Kramers' expression
\begin{align}
    \label{eq:extraStress}
    \ExtraStress(\Time,\Position) = \int_\ConfigurationDomain \hat\FPpdf(\Time, \Position, \hat\Configuration) \left( \sum_{n=1}^N \Force_n(\hat\Configuration_n) \hat\Configuration_n \Transpose  - N \Id_d \right) \,\Diff[\hat\Configuration],
\end{align}
where $\Force_n$ denotes the force components with respect to the distance vectors~$\hat\Configuration_n$.
Since only the divergence of the extra-stress tensor enters the Navier--Stokes equation, the conformation tensor $\Conformation \in \Matsym[d]$ is often used instead
\begin{align*}
    \Conformation(\Time,\Position) = \int_\ConfigurationDomain \hat\FPpdf(\Time,\Position,\hat\Configuration) \sum_{n=1}^N \Force_n(\hat\Configuration_n) \hat\Configuration_n \Transpose \,\Diff[\hat\Configuration] = \ExtraStress(\Time,\Position) + N \Id_d .
\end{align*}
Note that under these conditions, the Fokker--Planck \cref{eq:FP} has a well developed well-posedness theory in Maxwellian-weighted Sobolev spaces 
\begin{align*}
&\Hilbert = \left\{ \phi \in L^1_{\mathrm{loc}}(\Omega\times\ConfigurationDomain): \|\phi\|_{\Hilbert}<\infty\right\},\\
&\|\phi\|_\Hilbert^2 =  \int_{\Omega\times\ConfigurationDomain} \mu \left(|\phi|^2 + |\nabla_{\Position}\phi|^2 + |\nabla_{\Configuration}\phi|^2\right) \Diff[\Position] \Diff[\Configuration] ,
\end{align*} 
where $\mu$ denotes the Maxwellian, 
see e.g. \cite{Barrett2018,Debiec2025}.

\subsection{The Hookean linear-chain model}

For a linear Hookean force $\vec{F}(\hat\Configuration) = \hat\Configuration$ with configuration $\hat\Configuration \in \ConfigurationDomain = \R^{\dm}$, it is convenient to orthogonalize the Fokker--Planck equation with respect to the Rouse matrix $\RouseMatrix$. 
Changing the configuration coordinates to $\Configuration := (\tensor{Q} \otimes \Id_d)\hat\Configuration$, where $\tensor{Q}$ is the matrix of eigenvectors of the decomposition $\RouseMatrix = \tensor{Q}\Transpose\tensor{\Lambda}\tensor{Q}$, the configurational operator for the PDF $\FPpdf(\Time,\Position,\Configuration) := \hat\FPpdf(\Time,\Position,\hat\Configuration)$ takes the form
\begin{align*}
    \mathcal{L}_{\Configuration}\FPpdf &= \DivQ \Big[ (\tensor{Q}\otimes \Id_d) \Big((\Id_N \otimes \GradX\Velocity) (\tensor{Q}\Transpose \otimes \Id_d) \FPpdf \Configuration \\&\qquad\qquad
        - \frac{1}{2\Deborah} ((\tensor{Q}\Transpose\tensor{\Lambda} \tensor{Q}) \otimes \Id_d) \big((\tensor{Q}\Transpose \otimes \Id_d) \FPpdf\Configuration + (\tensor{Q}\Transpose \otimes \Id_d) \GradQ \FPpdf\big) \Big) \Big] \\
     &= \DivQ \Big((\Id_N \otimes \GradX\Velocity) \FPpdf \Configuration - \frac{1}{2\Deborah} (\tensor{\Lambda} \otimes \Id_d) \big(\FPpdf \Configuration + \GradQ \FPpdf\big) \Big) ,
\end{align*}
while the spatial operator remains the same.
Moreover, the conformation and extra-stress tensors then become
\[
    \Conformation(\Time,\Position) = \int_\ConfigurationDomain \FPpdf(\Time,\Position,\Configuration) \sum_{n=1}^N \Configuration_n \Configuration_n \Transpose \,\Diff[\Configuration] = \ExtraStress(\Time,\Position) + N \Id_d .
\]

Introducing the time and position dependent tensor $\tensor{M}(\Time,\Position)\in\R^{\dm\times\dm}$, 
\[
    \tensor{M}(\Time,\Position) = \frac{1}{2\Deborah}\tensor{\Lambda}\otimes\Id_d - \Id_N \otimes \GradX\Velocity(\Time,\Position),
\]
the configurational operator can be written as
\begin{equation}\label{eq:Lc_Hook}
    \mathcal{L}_{\Configuration} = -\DivQ \Big( \tensor{M}\Configuration + \frac{1}{2\Deborah} (\tensor{\Lambda}\otimes\Id_d)\GradQ\Big) .
\end{equation}
Note that all involved tensors are block-diagonal with $N$ blocks of size $d \times d$.
This will lead to a decoupling into $N$ subproblems of size $d \times d$ instead of a fully coupled system of 
size $\dm\times\dm = Nd \times Nd$.

If the flow is homogeneous in the sense that the Jacobian of the velocity is constant in time and symmetric, 
$\GradX\Velocity = \GradX\Velocity\Transpose$, and if $\GradX\Velocity$ is ``small enough'', such that $\tensor{M}$ 
is positive-definite, then the solution of the steady-state FP equation $\mathcal{L}_{\Configuration}\FPpdf = 0$ is given by the Gaussian (cf. \cite[eq. (15.1-8)]{Bird1987})
\begin{equation*}
\FPpdf(\Position,\Configuration) = \frac{\exp\left(- \Deborah\, \Configuration\Transpose (\tensor{\Lambda}\otimes\Id_d)^{-1}\tensor{M}(\Position) \Configuration \right)}{\int_\ConfigurationDomain \exp(- \Deborah\, \Configuration\Transpose (\tensor{\Lambda}\otimes\Id_d)^{-1}\tensor{M}(\Position) \Configuration) \,\Diff[\Configuration]} .
\end{equation*}
Indeed, for a symmetric, positive definite matrix $\tensor{B}$, we have
\[
\GradQ \exp(-\Configuration\Transpose \tensor{B}\Configuration) = -2 \tensor B\Configuration \exp(-\Configuration\Transpose \tensor{B}\Configuration) 
\]
Since $\tensor{M}$ and $\tensor{\Lambda}\otimes\Id_d$ commute, $\tensor{B} = \Deborah\,(\tensor{\Lambda}\otimes\Id_d)^{-1}\tensor{M} = \tensor{B}\Transpose$ is symmetric and positive-definite. We thus have
\[
\GradQ \FPpdf(\Position,\Configuration) = -2\Deborah\,(\tensor{\Lambda}\otimes\Id_d)^{-1}\tensor{M}(\Position)\Configuration\,\FPpdf(\Position,\Configuration)
\]
and
\[
\tensor{M}(\Position)\Configuration\,\FPpdf(\Position,\Configuration) + 
\frac{1}{2\Deborah}(\tensor{\Lambda}\otimes\Id_d)\GradQ \FPpdf(\Position,\Configuration) 
 = 0. 
 \]
As the matrix $(\tensor{\Lambda}\otimes\Id_d)^{-1}\tensor{M}(\Position)$ is block-diagonal, the steady-state can be equivalently stated as
\begin{equation}\label{eq:stationary}
    \FPpdf(\Position,\Configuration) = \prod_{n=1}^N \frac{\exp\left(- \Deborah\,\lambda_n^{-1} \Configuration_n\Transpose \tensor{M}_n(\Position) \Configuration_n \right)}{\int_{\R^d} \exp(- \Deborah\,\lambda_n^{-1} \Configuration_n\Transpose \tensor{M}_n(\Position) \Configuration_n) \,\Diff[\Configuration_i]} ,
\end{equation}
where $\lambda_n$ are the diagonal elements of $\tensor{\Lambda}$, which are also the eigenvalues of the Rouse matrix $\tensor{R}$, and $\tensor{M}_n \in \mathbb{R}^{d \times d}$ is defined as the $n$-th diagonal block of $\tensor{M}$, i.e., by 
\begin{align*}
  \tensor{M}_n =  \frac{\lambda_n}{2 \Deborah} \Id_d - \GradX u(\Time,\Position).
\end{align*}

More generally, when formally testing the full Fokker--Planck equation \eqref{eq:FP} with $\Configuration\Configuration\Transpose$ \cite[Sec.~15.3]{Bird1987}\cite{Debiec2025} or equivalently by a Hermite spectral approximation up to second degree \cite{Hetland2023,Beddrich2024,Beddrich2024DragReduction}, one obtains an exact macroscopic closure relation for the extra-stress tensor $\ExtraStress = \ExtraStress[\FPpdf](\Time,\Position)$ of the Fokker-Planck solution $\FPpdf$.

Using the upper convected derivative, given for a tensor $\tensor{\kappa}$ by
\begin{equation*}
    \frac{\mathcal{D}}{\mathcal{D}\Time} \tensor{\kappa} = \PDiff{\Time} \tensor{\kappa} + \left(\Velocity \cdot \GradX \right) \tensor{\kappa}
        - \left( \GradX\Velocity\, \tensor{\kappa}  + \tensor{\kappa}\, \GradX\Velocity\Transpose \right),
\end{equation*}
and the block-diagonal structure, the resulting set of partial differential equations reads
\begin{align*}
    \frac{\mathcal{D}}{\mathcal{D}\Time} \Conformation_n
    &= \CenterDiffusion\Laplace_{\Position} \Conformation_n - \frac{\lambda_n}{\Deborah} (\Conformation_n - \Id_d) ,\qquad\text{in }(0,T) \times \Omega, ~ n=1,\dots,N,
\end{align*}
where the conformation tensor is given by $\Conformation = \sum_{n=1}^N \Conformation_n$.
These equations can be also stated in the form
\begin{align}
    \label{eq:conformation}
    \left(\PDiff{\Time} + \Velocity \cdot \GradX - \CenterDiffusion\Laplace_{\Position}\right) \Conformation_n
    &= \frac{\lambda_n}{\Deborah}\Id_d - \tensor{M}_n \Conformation_n - \Conformation_n \tensor{M}_n\Transpose \qquad\text{in }(0,T) \times \Omega .
\end{align}
Hence, the extra-stress tensor is composed by $\ExtraStress = \sum_{n=1}^N \ExtraStress_n$ with the tensors $\ExtraStress_n := \Conformation_n - \Id_d$ satisfying
\begin{align} \label{eq:extrai}
    \frac{\mathcal{D}}{\mathcal{D}\Time} \ExtraStress_n
        &= \CenterDiffusion\Laplace_{\Position} \ExtraStress_n
        - \frac{\lambda_n}{\Deborah} \ExtraStress_n + \GradX\Velocity + \GradX\Velocity\Transpose, \qquad\text{in }(0,T) \times \Omega, ~ n=1,\dots,N.
\end{align}
Note that this is the diffusive Oldroyd-B model, see \cite{Debiec2025} for further details, in particular for a specification of the function spaces and the assumptions on initial data, which guarantee well-posedness. We note that it is precisely equation~\eqref{eq:extrai}, which we will re-derive here when applying the Dirac--Frenkel variational principle to a Gaussian manifold in \Cref{sec:hookean}.

According to the microscopic model, the macroscopic model is closed by homogeneous Neumann boundary conditions $\GradX\Conformation_n\cdot \vec{n} |_{(0,T) \times \partial\Domain} = \tensor{0}$, and the smooth initial condition $\Conformation_n |_{\Time = 0}  := \int_{\ConfigurationDomain} \FPpdf_0(\Configuration) \Configuration_n\Configuration_n\Transpose \,\Diff[\Configuration]$. Here we assume that the initial condition is given in such a way that the symmetric $\Conformation_n $ is positive definite and that $\int_{\ConfigurationDomain} \FPpdf_0(\Configuration) \Configuration_n\Configuration_m\Transpose \,\Diff[\Configuration] = \tensor{0} $ for $n \neq m$.

\section{Nonlinear variational approximation}
\label{sec:ansatz}

Our main result relies on two ingredients: an abstract variational projection principle, and
the invariance of the Gaussian manifold under the Hookean configurational operator.
In this section, we provide the formalism independently of the Gaussian choice,
while in \cref{sec:hookean}, the specific application is presented.

Here, we introduce the abstract framework for our time-depen\-dent variational approximation. The solution is approximated by a parametrized ansatz manifold of probability densities, and its time evolution is determined via a Dirac–Frenkel type variational principle that enforces weighted orthogonality of the residual with respect to the manifold’s tangent space. 

\subsection{Time-dependent variational principle}

Throughout this work, the evolution equation is considered on a Hilbert space $\Hilbert$. The approximation manifold $\Manifold\subset\Hilbert$ is assumed to admit a smooth parametrization $\Phi$. The reduction is defined by the Dirac–Frenkel variational principle and is understood as an evolution equation for the parameter variables. More precisely, we consider in a Hilbert space $\Hilbert$ an abstract evolution problem
\begin{equation}\label{eq:evolution}
\PDiff{\Time}\FPpdf(\Time,\Position,\Configuration) =
\mathcal{L}(\Time)\FPpdf(\Time,\Position,\Configuration),\qquad \FPpdf(0) = \FPpdf_0
\end{equation}
for an initial condition $\FPpdf_0(\Position,\Configuration) \in \Hilbert$, that is for almost every position $\Position\in\Domain\subset\R^d$ a probability density function on an open set $\ConfigurationDomain \subseteq \R^\dm$, and a linear operator~$\mathcal L(\Time)$, that ensures that the solution $\FPpdf(\Time,\Position,\cdot)$ is a probability density function on $\ConfigurationDomain$ for all times $\Time$ and positions $\Position \in \Domain$.  
We assume that the evolution operator is compatible with integration over the configuration variable: there exists a spatial operator 
\(\mathcal L_{\Position}(t)\) such that
\begin{equation}\label{eq:mass}
\int_{\mathcal D}\mathcal L(t)\varphi(x,q)\Diff[\Configuration]
=
-\mathcal L_{\Position}(t)
\left(
\int_{\mathcal D}\varphi(x,q)\Diff[\Configuration]
\right)
\end{equation}
for every sufficiently regular function \(\varphi\). We further assume that
\(\mathcal L_{\Position}(t)1=0\) to ensure propagation of normalization.
We consider time-dependent approximations $f(\Time)\approx\FPpdf(\Time)$ of the form 
\[
    f(\Time,\Position,\Configuration) = \Phi(\Parameter(\Time,\Position),\Configuration)
    \qquad \text{in}\ (0,T)\times\Domain\times\ConfigurationDomain,
\]
that are induced by a smooth, positive, nonlinear parametrization map $\Phi: \ParameterDomain\times\ConfigurationDomain\to(0,\infty)$ based on some open finite-dimensional parameter domain $\ParameterDomain \subset \mathbb{R}^M$. Candidates for such an ansatz are Gaussians, multi-Gaussians or neural networks with position dependent parameters.
We systematically construct $f(t)\in\Manifold$ from the corresponding approximation manifold 
\[
\Manifold = \Set*{ f\in\Hilbert \given f(\Position,\Configuration) = \Phi(\Parameter(\Position),\Configuration)\ \text{in}\ \Domain\times\ConfigurationDomain, \ \Parameter: \Domain\to\ParameterDomain },
\]
using a variant of the Dirac--Frenkel principle, see e.g. \cite{Lubich2005,LasserSu2022}, that is based on a weighted linear least squares approximation that only acts on the configurational degrees of freedom. 
Given the approximation $f(\Time)\in\Manifold$ at time~$\Time$, we require that for a.e. $(\Time,\Position) \in (0,T) \times \Omega$ it holds
\begin{gather}
    \PDiff{\Time} f(\Time) \in \TangentSpace[f(\Time)] \qquad\text{is such that} \notag \\
    \int_{\ConfigurationDomain} \varphi(\Position,\Configuration)\, \left(\PDiff{\Time} - \mathcal{L}(\Time)\right) f(\Time,\Position,\Configuration)\,
    \frac{\Diff[\Configuration]}{f(\Time,\Position, \Configuration)} = 0,\qquad
    \forall\varphi \in \TangentSpace[f(\Time)] \label{eq:var},
\end{gather}
where the tangent space $\TangentSpace[f(\Time)]\subset\Hilbert$ is the linear space that contains the tangent vectors at $f(\Time)$.
Note that we use an inner product for the orthogonality condition, which is weighted by the approximate solution.
This is indeed the formulation based on the Fisher--Rao information metric for probability density functions, when considering the Fokker--Planck equation as gradient flow, cf. \cite[Sec.~5.2.2]{zhang2024Nonlinpar} and \cite{Anderson2024,Chen2024}.

\subsection{Characterization and properties of the variational approximation}

We choose the admissible parameter fields as
\[
\mathcal Z_\Omega =
\left\{
z\in H^s(\Omega;\mathbb R^m):
z(\overline\Omega)\subset K
\right\},
\]
where \(s>\dim(\Omega)/2\) and \(K\Subset\mathcal Z\). The Sobolev embedding then makes the point-wise positivity constraint of the parametrization $\Phi$ meaningful. We assume that \(z\mapsto\Phi(z,\cdot)\) is sufficiently smooth and that its derivatives satisfy the integrability bounds required for the induced Nemytskii map
\(\Phi(z)(\Position,\Configuration)=\Phi(z(\Position),\Configuration)\)
to be smooth from \(\mathcal Z_\Omega\) into \(\mathcal H\). We further assume that the Fisher information matrix
\[
G(z)
=
\int_\ConfigurationDomain
\frac{
\partial_z\Phi(z,\Configuration)\otimes
\partial_z\Phi(z,\Configuration)
}{
\Phi(z,\Configuration)
}\Diff[\Configuration]
\]
is symmetric, positive definite for every parameter \(z\in\mathcal Z\), locally uniformly on compact subsets of \(\mathcal Z\). Under these assumptions, the parametrization has injective differential and the pulled-back Fisher--Rao tensor defines a nondegenerate Riemannian metric on the ansatz manifold $\Manifold$. This viewpoint is consistent with the recent treatment of parametrized families of Gaussian measures in \cite{LieroMielkeTseZhu_2026}.

For any $f = \Phi(z(\cdot),\cdot)\in\Manifold$ the tangent space writes 
\[
\TangentSpace[f] = \left\{\PDiff{\Parameter} \Phi(\Parameter(\cdot),\cdot)w\mid w\in\R^M\right\}.
\]
We consider a basis $\varphi_1(\Parameter(\Position),\cdot),\ldots,
\varphi_M(\Parameter(\Position),\cdot)$ of the range of $\PDiff{\Parameter}\Phi(\Parameter(\Position),\cdot)$, that is orthonormal in the sense that
\[
\int_\ConfigurationDomain \varphi_m(\Parameter(\Position),\Configuration)\,
\varphi_l(\Parameter(\Position),\Configuration)\,
\frac{\Diff[\Configuration]}{f(\Position,\Configuration)} = \delta_{m,l}       
\]
for all $m,l=1,\ldots,M$. With such a basis we define the partial orthogonal projection 
\begin{align*}
P_f: \Hilbert\to\TangentSpace[f],\quad
P_f \varphi(\Position,\Configuration) = 
        \sum_{m=1}^M \int_\ConfigurationDomain \varphi_m(\Parameter(\Position),\Configuration') \varphi(\Position,\Configuration')\frac{\Diff{\Configuration'}}{f(\Position,\Configuration')}\  \varphi_m(\Parameter(\Position),\Configuration).
\end{align*}
It will allow for an elementary characterization of the variational principle and an abstract a posteriori error representation. 

\begin{proposition}[Variational approximation]
    If the ansatz set $\Manifold$ is a manifold, then the following holds: 
    \begin{enumerate}[label=\normalfont(\roman*)]
        \item The variational orthogonality condition \eqref{eq:var} is  equivalent to the partial linear least squares problem: 
        \[
        \int_{\ConfigurationDomain} 
    \left( \PDiff{\Time}f(\Time,\Position,\Configuration) - \mathcal{L}(\Time) f(\Time,\Position,\Configuration)\right)^2
        \frac{\Diff[\Configuration]}{f(\Time,\Position,\Configuration)} = \min_{\partial_t f(t)} !
        \]
        \item The variational orthogonality condition \eqref{eq:var} is  equivalent to the partially projected evolution equation:
        \[    
    \PDiff{\Time} f(\Time) = P_{f(\Time)}\mathcal L f(\Time).
        \]   
    \item If $\int_\ConfigurationDomain f(0,\Position,\Configuration) \Diff[\Configuration] = 1$ everywhere in $\Omega$ and $f(\Time)\in \TangentSpace[f(\Time)]$, then
        \[
            \int_{\ConfigurationDomain}f(\Time,\Position,\Configuration) \,\Diff[\Configuration] = 1
        \]
        for all times $\Time$, for which the variational approximation exists.
    \item Let $\mathcal{T}(\Time,s)$ with $\Time,s\in [0,T]$ denote the evolution operator associated with the evolutionary equation \eqref{eq:evolution}.
        If $\FPpdf(0) = f(0)$, then
        \[
            \FPpdf(\Time) - f(\Time) = \int_0^t \mathcal{T}(t,s) \ (\Id_\Hilbert - P_{f(s)}) \mathcal L(s) f(s) \,\Diff[s]
        \]
        for all times $\Time$, for which the variational approximation exists.  
    \end{enumerate}
\end{proposition}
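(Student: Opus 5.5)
Throughout we work pointwise in $(\Time,\Position)$: for fixed $\Position$ the weighted inner product $\langle g,h\rangle_f := \int_\ConfigurationDomain g\,h\,\Diff[\Configuration]/f(\Time,\Position,\Configuration)$ is the Fisher--Rao inner product. By the assumption on the Fisher information matrix $G(z)$ it is positive definite on the finite-dimensional space $\TangentSpace[f]$, and $P_f$ is exactly the $\langle\cdot,\cdot\rangle_f$-orthogonal projection onto it, since it is written in an orthonormal basis $\varphi_1,\dots,\varphi_M$. All four items are then elementary consequences of Hilbert space projection theory, applied for each fixed $\Position$.

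For (i) and (ii), write $\PDiff{\Time} f = \PDiff{\Parameter}\Phi\, w$ with $w\in\R^M$, so the candidate time derivatives range over the linear space $\TangentSpace[f]$. The functional $\|v-\mathcal L f\|_f^2$ over $v\in\TangentSpace[f]$ is a strictly convex quadratic. Its first-order condition $\langle \varphi, v-\mathcal L f\rangle_f=0$ for all $\varphi\in\TangentSpace[f]$ is precisely \eqref{eq:var}, which gives (i); the minimizer is unique by positive definiteness. The same orthogonality condition characterizes $v=P_f\mathcal L f$: expand $v$ in the orthonormal basis and test with each $\varphi_m$ to get the coefficients $\langle\varphi_m,\mathcal L f\rangle_f$. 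This gives (ii).

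For (iii), the hypothesis $f(\Time)\in\TangentSpace[f(\Time)]$ lets us test \eqref{eq:var} with $\varphi=f$. The weight cancels, giving $\int_\ConfigurationDomain \PDiff{\Time} f\,\Diff[\Configuration] = \int_\ConfigurationDomain \mathcal L f\,\Diff[\Configuration] = -\mathcal L_{\Position}(\Time)\,m(\Time,\cdot)$ by \eqref{eq:mass}, where $m=\int f\,\Diff[\Configuration]$. So $m$ solves the spatial evolution $\PDiff{\Time} m = -\mathcal L_{\Position} m$ with $m(0)=1$. Since $\mathcal L_{\Position}1=0$, the constant $1$ is a solution, and uniqueness for this spatial problem (for instance the advection--diffusion operator $\mathcal L_{\Position}$ with Neumann data) gives $m\equiv1$. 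This uniqueness is the point needing care. I would invoke it as an assumed well-posedness of the spatial operator, or prove it by an $L^2$ energy estimate for $m-1$ using $\DivX\Velocity=0$.

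For (iv), set $e=\FPpdf-f$. By (ii), $\PDiff{\Time} f = \mathcal L f - (\Id_\Hilbert-P_f)\mathcal L f$, so $\PDiff{\Time} e = \mathcal L(\Time) e + (\Id_\Hilbert-P_{f})\mathcal L f$ with $e(0)=0$. Duhamel's formula for the evolution family $\mathcal T(\Time,s)$ then yields the stated representation. Formally, differentiate $s\mapsto \mathcal T(\Time,s)e(s)$ using $\PDiff{s}\mathcal T(\Time,s)=-\mathcal T(\Time,s)\mathcal L(s)$ and integrate from $0$ to $\Time$. The main obstacle is justifying this differentiation rigorously. It requires $f(s)$ to be regular enough, for instance in the domain of $\mathcal L(s)$, and the residual $(\Id_\Hilbert-P_{f(s)})\mathcal L(s)f(s)$ to be in $\Hilbert$ and integrable in time. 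I would take this from the smoothness of the Nemytskii map $\Phi$ into $\Hilbert$ and the well-posedness theory cited for \eqref{eq:FP}, interpreting the identity in the mild sense if needed.
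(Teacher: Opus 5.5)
Your proposal is correct and follows the paper's proof essentially step by step. Items (i) and (ii) come from the Fisher--Rao orthogonal projection, which the paper calls immediate. Item (iii) tests \eqref{eq:var} with $\varphi = f$ and then uses \eqref{eq:mass} and $\mathcal L_{\Position}1=0$. Item (iv) writes the error equation $\PDiff{\Time}(\FPpdf-f)=\mathcal L(\FPpdf-f)+(\Id_\Hilbert-P_f)\mathcal L f$ and applies Duhamel's principle. Your extra remarks on uniqueness for $\PDiff{\Time} m=-\mathcal L_{\Position}m$ and on the regularity needed to justify Duhamel's formula cover points the paper's proof passes over silently.
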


\begin{proof}
The statements (i) and (ii) are immediate. As for propagation of normalization (iii), we use \eqref{eq:var} with $\varphi = f(\Time)$ and obtain
    \begin{align*}
        \frac{\Diff}{\Diff[\Time]} \int_{\ConfigurationDomain} f(\Time,\Position,\Configuration) \,\Diff[\Configuration]
            &= \int_{\ConfigurationDomain} \PDiff{\Time} f(\Time,\Position,\Configuration) \, \frac{f(\Time,\Position,\Configuration)}{f(\Time,\Position,\Configuration)} \,\Diff[\Configuration]\\
            &\stackrel{\mathclap{\eqref{eq:var}}}{=} \int_{\ConfigurationDomain} \mathcal L(\Time) f(\Time,\Position,\Configuration)\, \frac{f(\Time,\Position,\Configuration)}{f(\Time,\Position,\Configuration)} \,\Diff[\Configuration]\\
            &= \int_{\ConfigurationDomain} \mathcal L(\Time) f(\Time,\Position,\Configuration) \,\Diff[\Configuration] \stackrel{\eqref{eq:mass}}{=} -\mathcal L_{\Position}(\Time)\int_\ConfigurationDomain f(\Time,\Position,\Configuration)\Diff[\Configuration].
    \end{align*}
    Hence, $m(\Time,\Position) := \int_{\ConfigurationDomain} f(\Time,\Position,\Configuration) \,\Diff[\Configuration]$ satisfies 
    \[
        \partial_\Time m(\Time) = -\mathcal L_{\Position}(\Time) m(\Time),\qquad m(0) = 1.
    \]
    Since $\mathcal L_{\Position}(\Time)1 = 0$, the solution is stationary with $m(t) = 1$.
    For the a posteriori error formula (iv), we calculate the time-derivative of the error,
    \begin{align*}
        \PDiff{\Time} \left( \FPpdf(\Time) - f(\Time) \right)
            &= \mathcal L(\Time) \FPpdf(\Time) - P_{f(\Time)}\mathcal L f(\Time)\\
            &= \mathcal L(\Time) \left( \FPpdf(\Time) - f(\Time) \right) + (\Id_\Hilbert- P_{f(\Time)})\mathcal L(\Time) f(\Time),
    \end{align*}
    and use Duhamel's principle.
\end{proof}

The time-dependent Dirac--Frenkel principle itself does not provide a general positivity-preservation theorem. In the present work, the parametrization $\Phi$ is chosen so that every element in $\Manifold$ is point-wise strictly positive. Hence, the approximate solution $f(t)$ remains point-wise positive for all times on which the reduced dynamics exists.

\section{Gaussian manifold on unbounded domains}
\label{sec:hookean}

In this section, we show that the macroscopic closure for the extra-stress given by \eqref{eq:extrai}
can be also obtained by a variational principle. To be more precise, the
variational formulation is based on the Dirac–Frankel principle applied to
a Gaussian approximation manifold.  This manifold is invariant under the linear
configurational dynamics, and we recover \eqref{eq:extrai}.
Hence, these different approaches yield an equivalent macroscopic formulation.

Recall that we consider the linear Hookean force model for the FP-NS system, such that $\ConfigurationDomain=\R^{\dm}$ with $\dm = Nd$. In this section, we assume that the physical domain $\Omega$ and the initial conditions for $\Velocity$ and $\Covariance$ are regular enough.
Motivated by the Gaussian stationary solution \eqref{eq:stationary}, we explore an approximation of the form 
\begin{align}
    \label{eq:ansatz}
    \FPpdf(\Time, \Position, \Configuration) \approx 
    \prod_{n=1}^N \frac{\exp\left(-\frac{1}{2} \Configuration_n\Transpose \Conformation_n(\Time,\Position)^{-1} \Configuration_n\right)}{\sqrt{(2\pi)^d \det(\Conformation_n(\Time, \Position))}}
    = \frac{\exp\left(-\frac{1}{2} \Configuration\Transpose \Covariance(\Time,\Position)^{-1} \Configuration\right)}{\sqrt{(2\pi)^{\dm} \det(\Covariance(\Time, \Position))}},
\end{align}
where $\Conformation_n(\Time, \Position) \in \Matsym[d]$ are symmetric and positive definite covariance matrices, and $\Covariance := \mathrm{diag}(\Conformation_1,\dots,\Conformation_N) \in \Matsym$ is the global block-diagonal covariance matrix.
We thus work in the variational setting choosing a nonlinear parametrization
\[
\Phi:\ParameterDomain\times\ConfigurationDomain\to (0,\infty),\quad
(\Covariance,\Configuration)\mapsto 
 \frac{\exp\left(-\frac{1}{2} \Configuration\Transpose \Covariance^{-1} \Configuration\right)}
{\sqrt{(2\pi)^{\dm} \det(\Covariance)}},
\]
that maps the smooth manifold $\ParameterDomain$ of block-diagonal symmetric and positive definite matrices with $N$ blocks of size $d\times d$, which is of dimension $M= N d(d+1)/2$, to a normalized Gaussian.
Here, the corresponding approximation manifold reads
\begin{align*}
    \Manifold = \Set*{ f\in\Hilbert \given f(\Position,\Configuration) = \frac{\exp\left(-\frac{1}{2} \Configuration\Transpose \Covariance(\Position)^{-1} \Configuration\right)}{\sqrt{(2\pi)^{Nd} \det(\Covariance(\Position))}} ,\ \Covariance : \Domain\to\ParameterDomain }.
\end{align*}
These Gaussian probability distributions have their covariance matrices as their second moments and thus provide simple access to the extra-stress tensor.
Indeed, we have for any $f\in\Manifold$ with $f(\Position,\Configuration)=\Phi(\Covariance(\Position),\Configuration)$ that
\begin{align}
    \ExtraStress[f](\Position) 
    = \int_{\ConfigurationDomain} f(\Position, \Configuration) \sum_{n=1}^N (\Configuration_n\Configuration_n\Transpose - \Id_d) \,\Diff[\Configuration]
    = \sum_{n=1}^N (\Conformation_n(\Position) - \Id_d). \label{eq:ansatz:tau}
\end{align}
In particular, the diagonal blocks $\Conformation_n(\Position)$ of the Gaussian covariance matrice are the summands of the conformation tensor.

\subsection{Derivation of the Hookean approximate solution}

In the following, we construct the variational approximation $f(t)\approx\FPpdf(t)$ given by the orthogonality principle \eqref{eq:var} proposed in \cref{sec:ansatz}.
We start by having a closer look at the tangent spaces of the Gaussian manifold and their mapping properties with respect to the differential operators of the Fokker--Planck equation.
The key finding is, that the linear Hookean force $\Force(\Configuration) = \Configuration$, $\Configuration\in\ConfigurationDomain$, defines a configurational differential operator~\eqref{eq:Lc_Hook}, which is exactly represented in the Gaussian tangent spaces. 

\begin{lemma}[Tangent space and differential expressions]
    \label{lem:tangent-space}
    Consider a Gaussian probability distribution $f\in \Manifold$ with $f(\Position,\Configuration) = \Phi(\Covariance(\Position),\Configuration)$ and $\Covariance(\Position)\in\ParameterDomain$.
    \begin{enumerate}[label=\normalfont(\roman*)]
    \item The tangent space of $\Manifold$ at $f$ satisfies
        \begin{align*}
            \TangentSpace[f] = \Set*{ \varphi\in\Hilbert \given \varphi(\Position,\Configuration) = \left(\Configuration \Transpose \tensor{A}(\Position) \Configuration - \tr(\tensor{A}(\Position) \Covariance(\Position))\right)f(\Position,\Configuration) ,\ 
            \tensor{A} : \Domain\to \Tilde{\mathcal{Z}} }.
        \end{align*}
        where $\Tilde{\mathcal{Z}} \subset \Matsym$ denotes the space of block-diagonal symmetric matrices consisting of $N$ blocks of size $d\times d$.
    \item The configurational differential operator satisfies $\mathcal{L}_{\Configuration}(\Time)f\in\TangentSpace[f]$.
        In particular, 
        \begin{align}
            \mathcal{L}_{\Configuration}(\Time) f
            &= \left(\Configuration\Transpose \tensor{A}_{\Configuration}(\Time) \Configuration
                - \tr\left(\tensor{A}_{\Configuration}(\Time)\Conformation \right) \right)f , \label{eq:configuration}
        \end{align}
        where $\tensor{A}_{\Configuration}(\Time,\Position) \in \Tilde{\mathcal{Z}}$ is given by
        \begin{align}
            \tensor{A}_{\Configuration}(\Time,\Position) =
            \frac12\Covariance(\Position)^{-1} \left(\tensor{M}(\Time,\Position)\Covariance(\Position) + \Covariance(\Position)\tensor{M}(\Time,\Position)\Transpose - \frac{1}{\Deborah} \tensor{\Lambda} \otimes \Id_d \right) \Covariance(\Position)^{-1}.
            \label{eq:configuration:tensors}
        \end{align}
    \item The spatial differential operator 
        $\mathcal{L}_{\Position}(\Time) = \Velocity(\Time)\cdot\GradX-\CenterDiffusion\Laplace_{\Position}$ satisfies
        \begin{equation}\label{eq:space}
            \mathcal{L}_{\Position}(\Time) f 
            = \left(\Configuration\Transpose \tensor{A}_{\Position}(\Time) \Configuration - \tr(\tensor{A}_{\Position}(\Time)\Covariance) \right) f + \CenterDiffusion \rho f 
        \end{equation}
        where $\tensor{A}_{\Position}(\Time,\Position) \in \Tilde{\mathcal{Z}}$ and the scalar remainder $\rho(\Position, \Configuration)\in\R$ are given by
        \begin{align*}
        \tensor{A}_{\Position}(\Time,\Position)
            &= \frac12 \Covariance(\Position)^{-1} \big(\mathcal{L}_{\Position}(\Time)\Covariance(\Position)\big) \Covariance(\Position)^{-1}\\ 
        \rho(\Position,\Configuration)
            &= \sum_{i=1}^d \Configuration\Transpose \big(\Covariance(\Position)^{-1} \PDiff{x_i}\Covariance(\Position)\big)^2 \Covariance(\Position)^{-1} \Configuration
            - \frac{1}{2}\sum_{i=1}^d \tr\left(\left(\Covariance(\Position)^{-1} \PDiff{x_i}\Covariance(\Position)\right)^2\right) \\&\quad
            -\frac14 \sum_{i=1}^d \left[ \Configuration\Transpose \Covariance(\Position)^{-1} \PDiff{x_i}\Covariance(\Position) \Covariance(\Position)^{-1} \Configuration - \tr\left(\Covariance(\Position)^{-1}\PDiff{x_i}\Covariance(\Position)\right)\right]^2
        \end{align*}
        In particular, if $\CenterDiffusion = 0$, then $\mathcal{L}_{\Position}(\Time) f\in\TangentSpace[f]$.
    \end{enumerate}
\end{lemma}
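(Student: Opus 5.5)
The plan is to compute everything directly from the Gaussian formula for $f=\Phi(\Covariance,\cdot)$. We use two elementary matrix-calculus identities: $\partial\log\det\Covariance = \tr(\Covariance^{-1}\partial\Covariance)$ and $\partial(\Covariance^{-1}) = -\Covariance^{-1}(\partial\Covariance)\Covariance^{-1}$.

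\textbf{Part (i).} Differentiate $\Phi$ in the direction of a block-diagonal symmetric $\tensor{B}$:
\[
\partial_{\Covariance}\Phi\,[\tensor{B}] = \tfrac12\left(\Configuration\Transpose\Covariance^{-1}\tensor{B}\Covariance^{-1}\Configuration - \tr(\Covariance^{-1}\tensor{B})\right) f .
\]
Set $\tensor{A} := \tfrac12\Covariance^{-1}\tensor{B}\Covariance^{-1}$. Since $\Covariance$ is block diagonal, $\tensor{A}$ is again block-diagonal symmetric. Moreover $\tr(\Covariance^{-1}\tensor{B}) = 2\tr(\tensor{A}\Covariance)$, and the map $\tensor{B}\mapsto\tensor{A}$ is a bijection of $\Tilde{\mathcal{Z}}$. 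Applying this pointwise in $\Position$ gives exactly the claimed description of $\TangentSpace[f]$.

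\textbf{Part (ii).} Use $\GradQ f = -\Covariance^{-1}\Configuration f$. Expanding the divergence in \eqref{eq:Lc_Hook} gives
\[
\mathcal{L}_{\Configuration}f = -\tr(\tensor{M})f + \Configuration\Transpose\Covariance^{-1}\tensor{M}\Configuration f + \tfrac{1}{2\Deborah}\left(\tr(\tensor{\Lambda}\otimes\Id_d\,\Covariance^{-1}) - \Configuration\Transpose\Covariance^{-1}(\tensor{\Lambda}\otimes\Id_d)\Covariance^{-1}\Configuration\right) f .
\]
Next, symmetrize the quadratic form: $\Configuration\Transpose\Covariance^{-1}\tensor{M}\Configuration = \tfrac12\Configuration\Transpose\Covariance^{-1}(\tensor{M}\Covariance+\Covariance\tensor{M}\Transpose)\Covariance^{-1}\Configuration$. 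This identifies the quadratic part with $\tensor{A}_{\Configuration}$ from \eqref{eq:configuration:tensors}. For the constant part, compute $\tr(\tensor{A}_{\Configuration}\Covariance) = \tfrac12\tr(\Covariance^{-1}(\tensor{M}\Covariance+\Covariance\tensor{M}\Transpose)) - \tfrac{1}{2\Deborah}\tr(\Covariance^{-1}(\tensor{\Lambda}\otimes\Id_d)) = \tr\tensor{M} - \tfrac{1}{2\Deborah}\tr(\Covariance^{-1}(\tensor{\Lambda}\otimes\Id_d))$, which matches $-(\text{constant})$. Block-diagonality of $\tensor{A}_{\Configuration}$ follows because $\tensor{M}$, $\Covariance$ and $\tensor{\Lambda}\otimes\Id_d$ are all block diagonal. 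By (i), this shows $\mathcal{L}_{\Configuration}f\in\TangentSpace[f]$.

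\textbf{Part (iii).} Write $f=\exp(g)$ with $g = -\tfrac12\Configuration\Transpose\Covariance^{-1}\Configuration - \tfrac12\log\det\Covariance + \text{const}$, and abbreviate $\tensor{K}_i := \Covariance^{-1}\PDiff{x_i}\Covariance$.

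The first derivative is $\PDiff{x_i} f = \PDiff{x_i}g\, f$ with
\[
\PDiff{x_i}g = \tfrac12\left(\Configuration\Transpose\tensor{K}_i\Covariance^{-1}\Configuration - \tr\tensor{K}_i\right),
\]
which is the tangent direction with $\tensor{B}=\PDiff{x_i}\Covariance$. The convection term $\Velocity\cdot\GradX f$ is therefore the tangent vector with $\tensor{B} = (\Velocity\cdot\GradX)\Covariance$.

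The second derivative is $\PDiff{x_i}^2 f = (\PDiff{x_i}^2 g + (\PDiff{x_i}g)^2)f$. Differentiating again, with $\PDiff{x_i}(\Covariance^{-1}\PDiff{x_i}\Covariance\Covariance^{-1}) = \Covariance^{-1}\PDiff{x_i}^2\Covariance\Covariance^{-1} - 2\Covariance^{-1}\PDiff{x_i}\Covariance\Covariance^{-1}\PDiff{x_i}\Covariance\Covariance^{-1}$ and $\PDiff{x_i}\tr\tensor{K}_i = \tr(\Covariance^{-1}\PDiff{x_i}^2\Covariance) - \tr(\tensor{K}_i^2)$, gives
\[
\PDiff{x_i}^2 g = \tfrac12\left(\Configuration\Transpose\Covariance^{-1}\PDiff{x_i}^2\Covariance\,\Covariance^{-1}\Configuration - \tr(\Covariance^{-1}\PDiff{x_i}^2\Covariance)\right) - \Configuration\Transpose\tensor{K}_i^2\Covariance^{-1}\Configuration + \tfrac12\tr(\tensor{K}_i^2).
\]
The first bracket is the tangent direction with $\tensor{B}=\PDiff{x_i}^2\Covariance$. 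Summing over $i$ and multiplying by $-\CenterDiffusion$, these directions combine with the convection term into $\tensor{A}_{\Position} = \tfrac12\Covariance^{-1}(\mathcal{L}_{\Position}\Covariance)\Covariance^{-1}$, which is block diagonal. What remains is $-\CenterDiffusion$ times
\[
-\Configuration\Transpose\tensor{K}_i^2\Covariance^{-1}\Configuration + \tfrac12\tr\tensor{K}_i^2 + \tfrac14\left[\Configuration\Transpose\tensor{K}_i\Covariance^{-1}\Configuration - \tr\tensor{K}_i\right]^2 ,
\]
summed over $i$; this is precisely $\CenterDiffusion\rho f$. For $\CenterDiffusion=0$ the remainder vanishes, so $\mathcal{L}_{\Position}f\in\TangentSpace[f]$.

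The main obstacle is the sign and factor bookkeeping in the second derivative and matching it to the stated $\rho$. Since $\rho$ contains a square of quadratic forms, i.e.\ a quartic polynomial in $\Configuration$, it is genuinely not in the tangent space; the identity is an exact decomposition rather than a projection, and it is this quartic term that must be tracked carefully.
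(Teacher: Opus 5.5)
Your proposal is correct and follows the paper's proof: you compute the Gaussian's derivatives in the covariance parameter, the configuration variable and the spatial variable directly, identify the quadratic parts as tangent directions via (i), and collect the leftover centre-diffusion terms into $\rho$. Your log-derivative bookkeeping $\partial_{x_i}^2 f = \bigl(\partial_{x_i}^2 g + (\partial_{x_i} g)^2\bigr) f$ is just a tidier packaging of the paper's product-rule expansion of $\Delta_{\vec{x}} f$.
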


\begin{proof}
    As for (i), we consider a curve $\Covariance_s(\Position)\in\ParameterDomain$ passing through $\Covariance(\Position)$ at $s=0$ and differentiate
    \[
    \frac{\Diff}{\Diff[s]} \det(\Covariance_s(\Position))^{-1/2} 
    = -\frac12 \det(\Covariance_s(\Position))^{-1/2} \tr\left(\Covariance_s(\Position)^{-1}\dot \Covariance_s(\Position)\right),
    \]
    using Jacobi's formula. Therefore, 
    \begin{align*}
        \frac{\Diff}{\Diff[s]} \Phi(\Covariance_s(\Position),\Configuration) =
        \frac12 \Big(\Configuration^\top \Covariance_s(\Position)^{-1}\dot\Covariance_s(\Position) \Covariance_s(\Position)^{-1}\Configuration
        -\tr\big(\Covariance_s(\Position)^{-1}\dot\Covariance_s(\Position)\big) \Big)\Phi(\Covariance_s(\Position),\Configuration),
    \end{align*}
    so that matrix $\tensor{A}(\Position) = \frac12 \Covariance(\Position)^{-1}\dot\Covariance_s(\Position) |_{s=0} \Covariance(\Position)^{-1}$ is block-diagonal and symmetric, and thus establishes our claim. 
    Note that this is a special case of \cite[Lemma 3.1]{Lasser2020}.
    \\
    As for the derivatives in configuration space, that is (ii), we compute
    \begin{gather*}
        \GradQ f = - \Covariance^{-1} \Configuration f,
    \qquad
        \Laplace_{\Configuration} f = \left(\Configuration\Transpose \Covariance^{-2} \Configuration - \tr\!\left(\Covariance^{-1}\right)\right) f .
    \end{gather*}
    Hence, we obtain
    \begin{align*}
        \mathcal L_{\Configuration}(\Time) f 
        &= - \DivQ\left( \tensor{M}(t)\Configuration f + \tfrac{1}{2\Deborah} (\tensor{\Lambda} \otimes \Id_d) \GradQ f\right)\\*[1ex]
        &= \Configuration\Transpose\left(\Covariance^{-1} \tensor{M}(\Time) - \tfrac{1}{2\Deborah} (\tensor{\Lambda} \otimes \Id_d) \Covariance^{-2}\right)\Configuration f
            - \tr(\tensor{M}(\Time) - \tfrac{1}{2\Deborah} (\tensor{\Lambda} \otimes \Id_d) \Covariance^{-1}) f \\*[1ex]
        &= \left( \Configuration\Transpose \tensor{A}_{\Configuration}(\Time)\Configuration -\tr(\tensor{A}_{\Configuration}(\Time)\Covariance) \right) f,
    \end{align*}    
    where we used that $(\tensor{\Lambda} \otimes \Id_d)$ and $\Covariance$ commute, and that $\Configuration\Transpose \tensor{B} \Configuration = \frac{1}{2}\Configuration\Transpose (\tensor{B} + \tensor{B}\Transpose) \Configuration$ for all $\tensor{B} \in \R^{d \times d}$.
    In particular, $\tensor{A}_{\Configuration}$ is block-diagonal and symmetric. \\
    As for (iii), we compute the spatial differentials of the Gaussian function as
    \begin{align*}
        \PDiff{x_i} f
        &= \frac{1}{2} \left(\Configuration\Transpose \Covariance^{-1} \PDiff{x_i}\Covariance \Covariance^{-1} \Configuration
            -\tr(\Covariance^{-1}\PDiff{x_i}\Covariance) \right) f ,
    \end{align*}
    and
    \begin{align*}
        \Laplace_{\Position} f
        &= \frac{1}{2} \sum_{i=1}^d \PDiff{x_i} \left( \Configuration\Transpose \Covariance^{-1} \PDiff{x_i}\Covariance \Covariance^{-1} \Configuration
            - \tr\left(\Covariance^{-1} \PDiff{x_i}\Covariance\right) \right) f \\&\qquad
         + \frac{1}{4} \sum_{i=1}^d \left( \Configuration\Transpose \Covariance^{-1} \PDiff{x_i}\Covariance \Covariance^{-1} \Configuration
            - \tr\left(\Covariance^{-1} \PDiff{x_i}\Covariance\right) \right)^2 f \\*[1ex]
        &= \frac12 \left(\Configuration\Transpose \Covariance^{-1} \Laplace_{\Position}\Covariance \Covariance^{-1} \Configuration
            - \tr\left(\Covariance^{-1} \Laplace_{\Position}\Covariance\right) \right)f\\&\qquad
           - \sum_{i=1}^d \left( \Configuration\Transpose \left(\Covariance^{-1} \PDiff{x_i}\Covariance\right)^2 \Covariance^{-1} \Configuration
            - \frac12\tr\left(\left(\Covariance^{-1} \PDiff{x_i}\Covariance\right)^2\right) \right) f\\&\qquad
             + \frac{1}{4} \sum_{i=1}^d \left( \Configuration\Transpose \Covariance^{-1} \PDiff{x_i}\Covariance \Covariance^{-1} \Configuration - \tr\left(\Covariance^{-1} \PDiff{x_i}\Covariance\right) \right)^2 f.
    \end{align*}
    Combining these, we obtain
    \begin{align*}
        \mathcal{L}_{\Position}(\Time) f 
        &= (\Velocity(\Time) \cdot \GradX - \CenterDiffusion \Laplace_{\Position})f\\*[1ex]
        &= \frac{1}{2} \left(\Configuration\Transpose \Covariance^{-1}  (\mathcal{L}_{\Position}(\Time)\Covariance) \Covariance^{-1} \Configuration - \tr\left(\Covariance^{-1}(\mathcal{L}_{\Position}(\Time)\Covariance)\right) \right) f\\&\quad
            + \CenterDiffusion\sum_{i=1}^d \left( \Configuration\Transpose \left(\Covariance^{-1} \PDiff{x_i}\Covariance\right)^2 \Covariance^{-1} \Configuration
            - \frac12\tr\left(\left(\Covariance^{-1} \PDiff{x_i}\Covariance\right)^2\right) \right) f\\&\quad
         - \frac{\CenterDiffusion}{4} \sum_{i=1}^d \left( \Configuration\Transpose \Covariance^{-1} \PDiff{x_i}\Covariance \Covariance^{-1} \Configuration - \tr\left(\Covariance^{-1} \PDiff{x_i}\Covariance\right) \right)^2 f \\*[1ex]
         &= \left(\Configuration\Transpose \tensor{A}_{\Position}(\Time) \Configuration - \tr\left(\Covariance \tensor{A}_{\Position}(\Time)\right) \right) f
            + \CenterDiffusion \rho(\Position, \Configuration) f
    \end{align*}
    with the claimed $\tensor{A}_{\Position}(\Time,\Position)$ and $\rho(\Position,\Configuration)$.
\end{proof}

The action of the spatial operator $\mathcal{L}_{\Position}(\Time)$ on a Gaussian $f$ creates a term caused by the center diffusion, that completely lies in the orthogonal complement of the tangent space of $\Manifold$ at $f$.

\begin{proposition}[Orthogonality of spatial remainder] \label{prop:orthogonality}
    For a Gaussian probability distribution $f\in \Manifold$, the scalar remainder $\rho$ of the spatial representation \eqref{eq:space} satisfies $\rho f \perp \TangentSpace[f]$ w.r.t $P_f$.
\end{proposition}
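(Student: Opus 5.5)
The plan is to verify the orthogonality pointwise in $\Position$. By the definition of $P_f$ and \cref{lem:tangent-space}~(i), it suffices to show that
\[
\int_{\ConfigurationDomain} \left(\Configuration\Transpose\tensor{A}\Configuration - \tr(\tensor{A}\Covariance)\right) f\,\rho f\,\frac{\Diff[\Configuration]}{f} = \int_{\ConfigurationDomain} \left(\Configuration\Transpose\tensor{A}\Configuration - \tr(\tensor{A}\Covariance)\right)\rho\, f\,\Diff[\Configuration] = 0
\]
for every block-diagonal symmetric $\tensor{A}$. Every basis function $\varphi_m$ is of this form, so this gives $P_f(\rho f)=0$. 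The integral is an expectation with respect to the centered Gaussian with covariance $\Covariance$, so the whole problem reduces to Gaussian moment identities for quadratic forms.

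First I normalize. Substitute $\Configuration = \Covariance^{1/2}\vec{y}$, where $\Covariance^{1/2}$ is again block-diagonal and symmetric, so that $\vec{y}$ is standard normal. Set $\tilde{\tensor{A}} = \Covariance^{1/2}\tensor{A}\Covariance^{1/2}$ and $\tensor{B}_i = \Covariance^{-1/2}\PDiff{x_i}\Covariance\,\Covariance^{-1/2}$; both are symmetric. Then
\[
\Configuration\Transpose\Covariance^{-1}\PDiff{x_i}\Covariance\,\Covariance^{-1}\Configuration = \vec{y}\Transpose\tensor{B}_i\vec{y},\qquad
\tr(\Covariance^{-1}\PDiff{x_i}\Covariance)=\tr\tensor{B}_i,
\]
\[
\Configuration\Transpose(\Covariance^{-1}\PDiff{x_i}\Covariance)^2\Covariance^{-1}\Configuration = \vec{y}\Transpose\tensor{B}_i^2\vec{y},\qquad
\tr\left((\Covariance^{-1}\PDiff{x_i}\Covariance)^2\right)=\tr\tensor{B}_i^2 .
\]
With these,
\[
\rho = \sum_{i=1}^d\Big[\vec{y}\Transpose\tensor{B}_i^2\vec{y} - \tfrac12\tr\tensor{B}_i^2 - \tfrac14\big(\vec{y}\Transpose\tensor{B}_i\vec{y}-\tr\tensor{B}_i\big)^2\Big],
\]
and the test function becomes $g:=\vec{y}\Transpose\tilde{\tensor{A}}\vec{y}-\tr\tilde{\tensor{A}}$, which has mean zero. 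It therefore suffices to treat each $i$ separately, and constant terms of $\rho$ drop out.

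The remaining work is two moment identities for a standard Gaussian $\vec{y}$ and symmetric $\tensor{S},\tensor{T},\tensor{U}$. The first is the centered covariance $\mathbb{E}[(\vec{y}\Transpose\tensor{S}\vec{y}-\tr\tensor{S})(\vec{y}\Transpose\tensor{T}\vec{y})]=2\tr(\tensor{S}\tensor{T})$, which follows from Isserlis' theorem. The second is the third joint centered moment
\[
\mathbb{E}\Big[\prod_{k}(\vec{y}\Transpose\tensor{S}_k\vec{y}-\tr\tensor{S}_k)\Big]=8\tr(\tensor{S}_1\tensor{S}_2\tensor{S}_3),
\]
for $\tensor{S}_1=\tensor{S},\tensor{S}_2=\tensor{T},\tensor{S}_3=\tensor{U}$ (symmetrized over orderings, which all give the same value when two of the matrices coincide). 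Applying both with $\tensor{S}=\tilde{\tensor{A}}$ and $\tensor{T}=\tensor{U}=\tensor{B}_i$ gives
\[
\mathbb{E}[g\,\rho_i] = 2\tr(\tilde{\tensor{A}}\tensor{B}_i^2) - \tfrac14\cdot 8\tr(\tilde{\tensor{A}}\tensor{B}_i^2) = 0,
\]
and summing over $i$ proves the claim.

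The main obstacle is the third-moment identity. My approach is to use the cumulant generating function $\log\mathbb{E}\exp(\vec{y}\Transpose\tensor{S}\vec{y}) = -\tfrac12\log\det(\Id-2\tensor{S})$ with $\tensor{S}=\sum_k s_k\tensor{S}_k$. Expanding $-\tfrac12\log\det(\Id-2\tensor{S})=\sum_{m\ge1} \tfrac{2^{m-1}}{m}\tr(\tensor{S}^m)$ and reading off the coefficient of $s_1s_2s_3$ yields the third joint cumulant. For centered variables the third joint cumulant equals the third joint moment, so this gives the formula. The cubic coefficient is $\tfrac43\tr(\tensor{S}^3)$, and the coefficient of $s_1s_2s_3$ in $\tr(\tensor{S}^3)$ is $3\tr(\tensor{S}_1\tensor{S}_2\tensor{S}_3)+3\tr(\tensor{S}_1\tensor{S}_3\tensor{S}_2)$. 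Hence the third joint cumulant is $4\tr(\tensor{S}_1\tensor{S}_2\tensor{S}_3)+4\tr(\tensor{S}_1\tensor{S}_3\tensor{S}_2)$, which equals $8\tr(\tensor{S}_1\tensor{S}_2\tensor{S}_3)$ for symmetric matrices because transposing the trace reverses the cyclic order. I will double-check the factor $8$ with a scalar example. For $D=1$ and $\tensor{S}_k=1$ the identity reads $\mathbb{E}(y^2-1)^3=8$, and indeed $\mathbb{E}y^6-3\mathbb{E}y^4+3\mathbb{E}y^2-1=15-9+3-1=8$.
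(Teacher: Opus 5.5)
Your proposal is correct, but it reaches the result by a different mechanism than the paper. Both arguments start from the same whitening $\Configuration=\Covariance^{1/2}\vec{y}$; the paper's $\vec{r}$ and $\tensor{D}_i$ are your $\vec{y}$ and $\tensor{B}_i$.

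The paper then diagonalizes each $\tensor{D}_i$ and rewrites $\rho_i$ explicitly as $-\frac14\sum_k\lambda_{i,k}^2H_4(\alpha_{i,k})-\frac12\sum_{n<k}\lambda_{i,k}\lambda_{i,n}H_2(\alpha_{i,k})H_2(\alpha_{i,n})$. This is a pure degree-four Hermite component. It next expands the test polynomial in products of Hermite polynomials of total degree at most two in the same eigencoordinates. One-dimensional Hermite orthogonality then kills every term.

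You instead stay basis-free and compute only the single pairing that is needed. You use the Isserlis identity $\mathbb{E}[(\vec{y}\Transpose\tensor{S}\vec{y}-\tr\tensor{S})\,\vec{y}\Transpose\tensor{T}\vec{y}]=2\tr(\tensor{S}\tensor{T})$ and the third joint cumulant $8\tr(\tensor{S}_1\tensor{S}_2\tensor{S}_3)$, read off from $-\frac12\log\det(\Id-2\tensor{S})$. Your constants check out, including the scalar sanity check. The cancellation $2\tr(\tilde{\tensor{A}}\tensor{B}_i^2)-\frac14\cdot 8\tr(\tilde{\tensor{A}}\tensor{B}_i^2)=0$ is then transparent as a trace identity.

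What each route buys:
\begin{itemize}
\item Your route avoids eigendecompositions and Hermite bookkeeping, and it works for arbitrary symmetric $\tilde{\tensor{A}}$. The price is an auxiliary cumulant lemma.
\item The paper's route gives the stronger structural fact that $\rho f$ is orthogonal to all of $fP_2$, not only to the tangent directions $(\vec{y}\Transpose\tilde{\tensor{A}}\vec{y}-\tr\tilde{\tensor{A}})f$. This explains why the center-diffusion remainder is invisible to any second-moment projection.
\end{itemize}
You could recover that stronger statement in one line: linear test functions vanish by parity, and $\mathbb{E}[\rho_i]=\tr\tensor{B}_i^2-\frac12\tr\tensor{B}_i^2-\frac14\cdot 2\tr\tensor{B}_i^2=0$ by your first identity.
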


\begin{proof}
    We show the equivalent statement that $\rho \perp \TangentSpace[f]$ in $L^2$.
    To this end, we rewrite $\rho$ using the substitutions $\vec{r} = \Covariance^{-1/2}\Configuration$ and $\tensor{D}_i = \Covariance^{-1/2} \PDiff{x_i}\Covariance \Covariance^{-1/2}$ to obtain 
    \begin{align}
        \label{eq:Rterm}
        \rho(\Covariance^{1/2}\vec{r})
        &= \sum_{i=1}^d \underbrace{\Big( \vec{r}\Transpose \tensor{D}_i^2 \vec{r}
            - \tfrac{1}{2} \tr\left(\tensor{D}_i^2\right)
            - \tfrac{1}{4} \left[ \vec{r}\Transpose \tensor{D}_i \vec{r} - \tr(\tensor{D}_i) \right]^2 \Big)}_{=:\,\rho_i(\vec{r})}.
    \end{align}
    As $\tensor{D}_i$ is symmetric it can be written as $\tensor{D}_i = {\tensor{Q}}_i\Transpose \tensor{\Lambda}_i \tensor{Q}_i$ with $\tensor{\Lambda}_i$ being a diagonal matrix with eigenvalues
    $\lambda_{i,k}$, $ k=1, \ldots, \dm$, and $\tensor{Q}_i$ being orthogonal.
    Then each $\vec{r} \in \R^\dm$ can be obviously written as linear combination of the associated eigenvectors $\vec{v}_{i,k}$, i.e., $ \vec{r} = \sum_{k=1}^\dm \alpha_{i,k} \vec{v}_{i,k}$.
    Reformulating each summand term, $i = 1,\dots,\dm$, on the right of \eqref{eq:Rterm} yields
    \begin{align*}
        \rho_i(\vec{r}) 
        &= \vec{r}\Transpose \tensor{D}_i^2 \vec{r}
            - \frac{1}{2} \tr\left(\tensor{D}_i^2\right)
            - \frac{1}{4} \left[ \vec{r}\Transpose \tensor{D}_i \vec{r} - \tr(\tensor{D}_i) \right]^2 \\
        &= \sum_{k=1}^\dm \lambda_{i,k}^2 \left( \alpha_{i,k}^2 - \tfrac{1}{2} \right)
            - \frac{1}{4} \left[ \sum_{k=1}^\dm \lambda_{i,k} \left( \alpha_{i,k}^2 - 1 \right) \right]^2 \\
        &= -\frac{1}{4} \sum_{k=1}^\dm \lambda_{i,k}^2 \left(\alpha_{i,k}^4 - 6 \alpha_{i,k}^2 + 3\right)
            - \frac{1}{2} \sum_{k=1}^\dm \sum_{n=1}^{k-1} \lambda_{i,k} \lambda_{i,n} \left( \alpha_{i,k}^2 - 1 \right) \left( \alpha_{i,n}^2 - 1 \right) \\
        &= -\frac{1}{4} \sum_{k=1}^\dm \lambda_{i,k}^2 H_4(\alpha_{i,k})
            - \frac{1}{2} \sum_{k=1}^\dm \sum_{n=1}^{k-1} \lambda_{i,k} \lambda_{i,n} H_2(\alpha_{i,k}) H_2(\alpha_{i,n}) ,
    \end{align*}
    where $H_2$ and $H_4$ are the second and fourth one-dimensional Hermite polynomial, respectively. 
    Since $\TangentSpace[f] \subset f P_2$, where $P_2$ denotes polynomials (in~$\Configuration$) of total degree $\le 2$, we can write any function $\varphi \in \TangentSpace[f]$ as
    \begin{align*}
        \varphi(\Configuration) &= \varphi(\Covariance^{1/2}\vec{r}) = \frac{\exp\left(-\frac{|\vec{r}|^2}{2}\right)}{\sqrt{(2\pi)^\dm \det(\Covariance)}} g(\vec{r}) \\
        g(\vec{r}) &= \sum_{\genfrac{}{}{0pt}{2}{0 \leq m_1, \ldots , m_\dm \leq 2}{\sum_{k=1}^\dm m_k \leq 2}} \beta_{i,m_1, \ldots , m_\dm} \prod_{j=1}^\dm  H_{m_j}(\alpha_{i,j}).
    \end{align*}
    Hence, testing $\rho$ with $\varphi \in \TangentSpace[f]$ yields
    \begin{align*}
        \int_{\ConfigurationDomain} \rho(\Configuration) \varphi(\Configuration) \,\Diff[\Configuration]
        &= \int_{\ConfigurationDomain} \rho(\Covariance^{1/2}\vec{r}) \varphi(\Covariance^{1/2}\vec{r}) \det(\Covariance^{1/2})\,\Diff[\vec{r}] \\
        &= \sum_{i=1}^d \underbrace{\int_{\ConfigurationDomain} \frac{\exp\left(-\frac{|\vec{r}|^2}{2}\right)}{\sqrt{(2\pi)^\dm}} \rho_i(\vec{r}) g(\vec{r}) \,\Diff[\vec{r}]}_{=:\, I_i} .
    \end{align*}
    In the following, we show that all $I_i$ are zero.
    Setting $\vec{\alpha}_i := (\alpha_{i,1} , \ldots , \alpha_{i,\dm})\Transpose$, we get $|\vec{r}| = |\vec{\alpha}_i|$ and $\int_{\ConfigurationDomain} \ldots \Diff[\vec{r}] = \int_{\ConfigurationDomain} \ldots \Diff[\vec{\alpha}_i]$.
    Now the weighted orthogonality of the Hermite polynomials yields that for all indices $i,k \leq \dm$ and $ 0 \leq j \leq 2$, we get
    \begin{align} \label{eq:pol2}
        \int_{\R} \exp\left(-\tfrac{\alpha_{i,k}^2}{2} \right)  H_4(\alpha_{i,k})   H_{j}(\alpha_{i,k}) \,\Diff[\alpha_{i,k}] =0 .
    \end{align}
    As in the representation of $g(\cdot)$ we have $m_n + m_k \leq \sum_{l=1}^\dm m_l \leq 2$, there is either $m_n$ or $m_k$ strictly smaller than two.
    Then the orthogonality yields
    \begin{align} \label{eq:pol4}
        \int_{\R} \int_{\R} \exp\left(-\tfrac{\alpha_{i,k}^2}{2} \right) \exp\left(-\tfrac{\alpha_{i,n}^2}{2} \right)  H_2(\alpha_{i,k})  H_2(\alpha_{i,n})
        H_{m_k}(\alpha_{i,k})  H_{m_n}(\alpha_{i,n})\,\Diff[\alpha_{i,k}]\Diff[\alpha_{i,n}]  =0 .
    \end{align}  
    All summation terms in the definition of $I_i$ have a factor in the form of either \eqref{eq:pol4} or \eqref{eq:pol2} and thus $I_i =0$.
\end{proof}

Combining \cref{lem:tangent-space} and \cref{prop:orthogonality}, we obtain the variational equations of motion for the Hookean Fokker--Planck equations.
We note that the variational equations of motion are precisely the diffusive Oldroyd-B model~\eqref{eq:extrai}.

\begin{proposition}[Equations of motion]
    \label{prop:parameters}
    The solution to the variational approximation by \cref{eq:var} in the Gaussian manifold is given by $f(\Time)\in \Manifold$ with $f(\Time,\Position,\Configuration) = \Phi(\Covariance(\Time,\Position),\Configuration)$ and $\Covariance(\Time,\Position)\in\ParameterDomain$ if and only if the conformation tensor $\Covariance(\Time, \Position)$ satisfies 
    \begin{equation}\label{eq:eom}
    \PDiff{\Time} \Covariance + (\Velocity \cdot \GradX) \Covariance - \CenterDiffusion \Laplace_{\Position} \Covariance
            = \frac{1}{\Deborah} (\tensor{\Lambda} \otimes \Id_d) - \Covariance \tensor{M}\Transpose - \tensor{M} \Covariance.
    \end{equation}
    In particular, the variational approximation of the extra-stress tensor \eqref{eq:extraStress} coincides with the exact one.
    The variational approximation even exactly solves the Hookean Fokker--Planck equation \eqref{eq:FP} when $\CenterDiffusion = 0$, i.e., without center-of-mass diffusion.
\end{proposition}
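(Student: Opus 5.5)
We propose to work pointwise in $(\Time,\Position)$ and use the characterization of the variational principle as the projected evolution $\PDiff{\Time} f = P_{f}\mathcal L f$ from part (ii) of the Variational approximation proposition. Here the full operator is $\mathcal L = -(\mathcal L_{\Position} + \mathcal L_{\Configuration})$, matching the sign convention in \eqref{eq:FP}. For $f(\Time)=\Phi(\Covariance(\Time,\cdot),\cdot)$, the chain-rule computation in the proof of \cref{lem:tangent-space}(i) gives
\[
\PDiff{\Time} f = \Bigl(\Configuration\Transpose \tensor{A}_{\Time}\Configuration - \tr(\tensor{A}_{\Time}\Covariance)\Bigr) f,
\qquad \tensor{A}_{\Time} = \tfrac12 \Covariance^{-1}(\PDiff{\Time}\Covariance)\Covariance^{-1},
\]
and $\PDiff{\Time} f$ lies in $\TangentSpace[f]$ automatically.

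The next step is to decompose $\mathcal L f$. By \cref{lem:tangent-space}(ii),(iii),
\[
\mathcal L f = -\Bigl(\Configuration\Transpose(\tensor{A}_{\Position}+\tensor{A}_{\Configuration})\Configuration - \tr\bigl((\tensor{A}_{\Position}+\tensor{A}_{\Configuration})\Covariance\bigr)\Bigr) f - \CenterDiffusion\rho f .
\]
The first term belongs to $\TangentSpace[f]$, since $\tensor{A}_{\Position},\tensor{A}_{\Configuration}\in\Tilde{\mathcal Z}$. The second term is annihilated by $P_f$ by \cref{prop:orthogonality}. Hence $P_f\mathcal L f$ is exactly the first term.

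The condition \eqref{eq:var} then becomes an equality of two tangent vectors. The map $\tensor{A}\mapsto (\Configuration\Transpose\tensor{A}\Configuration-\tr(\tensor{A}\Covariance))f$ is injective on $\Tilde{\mathcal Z}$: a quadratic polynomial in $\Configuration$ vanishes identically only if its quadratic part vanishes, which forces $\tensor{A}=0$. This injectivity is equivalent to positivity of the Fisher matrix. We therefore obtain $\tensor{A}_{\Time} = -\tensor{A}_{\Position} - \tensor{A}_{\Configuration}$. Multiplying on both sides by $2\Covariance$ and inserting the formulas for $\tensor{A}_{\Position}$ and \eqref{eq:configuration:tensors} gives
\[
\PDiff{\Time}\Covariance = -\mathcal L_{\Position}\Covariance - \tensor{M}\Covariance - \Covariance\tensor{M}\Transpose + \tfrac1\Deborah\tensor{\Lambda}\otimes\Id_d ,
\]
which is \eqref{eq:eom}. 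Every step is an equivalence, so the ``if and only if'' follows. The main point of care is the sign convention of $\mathcal L$ relative to \eqref{eq:evolution} and \eqref{eq:FP}; this must be tracked consistently so that the drift term comes out as $-\tensor{M}\Covariance-\Covariance\tensor{M}\Transpose$.

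For the extra stress, the block-diagonal structure of $\tensor{M}$, $\tensor{\Lambda}\otimes\Id_d$ and $\Covariance$ lets \eqref{eq:eom} split into the $N$ block equations \eqref{eq:conformation}. The same block equations, with identical Neumann boundary data and identical initial data $\Conformation_n|_{\Time=0}$, govern the exact second moments of $\FPpdf$; here the vanishing off-diagonal initial moments are what make the Gaussian's block-diagonal ansatz consistent. Uniqueness for this linear parabolic system then gives equal $\Conformation_n$, and by \eqref{eq:ansatz:tau} the two extra-stress tensors agree.

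Finally, suppose $\CenterDiffusion=0$. Then $\rho f$ drops out, so $\mathcal L f\in\TangentSpace[f]$ and $P_f\mathcal L f=\mathcal L f$. Thus $f$ satisfies \eqref{eq:FP} exactly. Equivalently, in the error formula of part (iv) of the Variational approximation proposition the integrand $(\Id_\Hilbert-P_{f})\mathcal L f$ vanishes, so $\FPpdf=f$ whenever the initial datum is the corresponding Gaussian.
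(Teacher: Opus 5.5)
Your proposal is correct and follows the same route as the paper. The paper's proof simply combines the tangent-space lemma with the orthogonality of the remainder $\rho f$: $\mathcal{L}_{\Configuration} f$ is exactly tangent, and $\mathcal{L}_{\Position} f$ has the tangent part $\tensor{A}_{\Position}$ plus the remainder $\rho f$, so the equation for $\Covariance$ can be read off. You also make explicit two steps the paper leaves implicit: the injectivity of $\tensor{A}\mapsto(\Configuration\Transpose\tensor{A}\Configuration-\tr(\tensor{A}\Covariance))f$, which justifies matching coefficients, and the uniqueness argument with matching initial second moments behind the coincidence of the extra-stress tensors.
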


\begin{proof}
    This follows directly from combining the differential operators in space and configuration using \cref{lem:tangent-space} and the orthogonality result of \cref{prop:orthogonality}.
\end{proof}

\subsection{Existence of unique solutions}

In the following, we study the well-posedness of the equation of motion for the covariance matrix $\Covariance(\Time,\Position)$ of the variational Gaussian approximation.
To simplify the exposition, we only consider \cref{eq:eom} for an arbitrary, regular enough, divergence-free, given velocity field $\Velocity(\Time,\Position)$.
For the case of full coupling with the Navier--Stokes equations, we refer to \cite{Barrett2018,Debiec2025}.
Note that therein the authors need to introduce a defect measure in the momentum equation to account for the lack of compactness in the extra-stress tensor.

\begin{lemma}
    \label{lem:existence}
    Let $\Velocity \in L^\infty(0, T; W^{1,\infty}(\Domain))$ and $\tensor{M} = \frac{1}{2\Deborah} (\tensor{\Lambda} \otimes \Id_d) - \GradX\Velocity$ be given.
    Then, the weak solution $\Covariance$ of the advection-diffusion-reaction equation
    \begin{gather*}
        \PDiff{\Time} \Covariance + (\Velocity \cdot \GradX) \Covariance - \CenterDiffusion \Laplace_{\Position} \Covariance
            = \frac{1}{\Deborah} (\tensor{\Lambda} \otimes \Id_d) - \Covariance \tensor{M}\Transpose - \tensor{M} \Covariance ,\qquad\text{in }(0,T) \times \Domain \\
        \GradX\Covariance \cdot \vec{n} |_{(0,T) \times \partial\Domain}  = \tensor{0}, \qquad
        \Covariance|_{\Time = 0} = \Covariance_0 ,
    \end{gather*}
    takes its values in the set of symmetric positive definite matrices.
\end{lemma}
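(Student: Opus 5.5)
Symmetry and positive definiteness are handled separately, both by the weak maximum principle for scalar advection–diffusion–reaction equations under the homogeneous Neumann condition (the paper assumes $\Covariance_0$ symmetric positive definite). I take for granted that the linear system has a unique weak solution $\Covariance\in L^2(0,T;H^1(\Domain))\cap H^1(0,T;(H^1)')$ with $\Velocity\in L^\infty(0,T;W^{1,\infty})$ and $\tensor{M}\in L^\infty$, which follows from standard Galerkin/Lions arguments since the system is linear in $\Covariance$ with bounded coefficients.

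\emph{Symmetry.} The right-hand side maps $\Covariance\mapsto \Covariance^\top$ consistently, because $\tensor{\Lambda}\otimes\Id_d$ is symmetric and $(\Covariance\tensor{M}^\top+\tensor{M}\Covariance)^\top = \Covariance^\top\tensor{M}^\top + \tensor{M}\Covariance^\top$. Hence $\Covariance^\top$ solves the same problem with the same initial datum $\Covariance_0=\Covariance_0^\top$, and uniqueness gives $\Covariance=\Covariance^\top$. The same argument preserves the block-diagonal structure, since $\tensor{M}$ is block diagonal and the off-diagonal blocks satisfy a homogeneous linear system with zero data.

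\emph{Positive definiteness.} Fix a constant vector $\vec{\xi}\in\R^{\dm}$ and test the equation with $\vec{\xi}$ from both sides. To get a scalar equation, I remove the reaction term by a transport in matrix space. Let $\tensor{S}(\Time,\Position)$ solve $\PDiff{\Time}\tensor{S}+(\Velocity\cdot\GradX)\tensor{S} = -\tensor{M}\tensor{S}$ with $\tensor{S}(0)=\Id$. Then $\tensor{S}$ is invertible, being a fundamental matrix along characteristics, but it is not spatially smooth enough to commute with $\Laplace_{\Position}$. The diffusion term therefore breaks a clean conjugation, and this is the main obstacle.

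To avoid it, I would argue by the minimum eigenvalue instead. Let $m(\Time,\Position)=\min_{|\vec\xi|=1}\vec\xi^\top\Covariance\vec\xi$. This is concave in $\Covariance$, so formally it is a supersolution: $\Laplace\lambda_{\min}(\Covariance)\ge \vec\xi^\top(\Laplace\Covariance)\vec\xi$ at a minimizer, in the viscosity sense. With $\vec\xi$ the minimizing eigenvector, the reaction term gives $-2\vec\xi^\top\tensor{M}\Covariance\vec\xi = -2m\,\vec\xi^\top\tensor{M}\vec\xi$, because $\Covariance\vec\xi=m\vec\xi$. Hence
\[
\PDiff{\Time} m + \Velocity\cdot\GradX m - \CenterDiffusion\Laplace_{\Position} m \ \ge\ \tfrac{1}{\Deborah}\lambda_{\min}(\tensor{\Lambda}) - 2\|\tensor{M}\|_{L^\infty}\, m .
\]
The source term $\lambda_{\min}(\tensor{\Lambda})/\Deborah$ is positive, since the Rouse matrix is positive definite. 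Comparison with the ODE solution $\underline{m}(\Time)$ of $\underline{m}' = \lambda_{\min}/\Deborah - 2\|\tensor{M}\|_\infty\underline{m}$, $\underline{m}(0)=\min_{\Domain}\lambda_{\min}(\Covariance_0)>0$, then yields $m\ge\underline{m}>0$ on $[0,T]$. The Neumann condition makes constants admissible comparison functions.

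To make this rigorous for weak solutions without eigenvalue calculus, I would use a Stampacchia truncation. Fix any unit $\vec\xi$ and set $w=\vec\xi^\top\Covariance\vec\xi - \underline m(\Time)$; the goal is to show $w^-=0$ by testing with $w^-$. The cross term $\vec\xi^\top\tensor{M}\Covariance\vec\xi$ does not close in $w$ alone, so I would first prove lower bounds by this truncation applied to the whole matrix via $\int \|(\Covariance-\underline m\Id)^-\|_F^2$, i.e.\ the negative part in the spectral sense. This quantity is a convex spectral function, hence compatible with the diffusion in the energy estimate. Its reaction contribution is bounded by $C\int\|(\Covariance-\underline m\Id)^-\|_F^2$ plus a term that is nonpositive thanks to the positive source. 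Gronwall's inequality with zero initial value then concludes. Justifying the chain rule for this convex matrix function in $H^1$, for instance by regularizing it, is the delicate technical point.
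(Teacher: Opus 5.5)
Your proof is correct, and for positive definiteness it takes a genuinely different route from the paper; the symmetry part is the same transposition/uniqueness argument.

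\textbf{The paper's route.} It works with evolution families. Using Pazy's theory, it constructs the evolution operator $\Flow(\Time,\Time_0)$ of $\tfrac12\CenterDiffusion\Laplace_{\Position}-\tfrac12\Velocity\cdot\GradX-\tensor{M}$ acting from the left, and a transposed family acting from the right. It then writes the Lyapunov-type formula $\Covariance(\Time)=\Flow(\Time,0)\Covariance_0\Flow(\Time,0)\Transpose+\frac{1}{\Deborah}\int_0^\Time\Flow(\Time,s)(\tensor{\Lambda}\otimes\Id_d)\Flow(\Time,s)\Transpose\,\mathrm{d}s$ and reads off positivity as for the matrix ODE.

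\textbf{Your route.} You compare with a spatially constant barrier $\underline m(\Time)$ via a Stampacchia truncation on the spectral negative part of $X=\Covariance-\underline m\Id$. The bookkeeping closes:
\begin{itemize}
\item convexity of $X\mapsto\frac12\|X^-\|_F^2$ makes the diffusion term nonpositive;
\item $\DivX\Velocity=0$ with no-slip removes the advection term;
\item $X^-X=-(X^-)^2$ turns the reaction into $-\tr\big((X^-)^2(\tensor{M}+\tensor{M}\Transpose)\big)\le 2\|\tensor{M}\|_\infty\|X^-\|_F^2$;
\item your ODE for $\underline m$ is exactly what makes $\frac1{\Deborah}\tensor{\Lambda}\otimes\Id_d-\underline m'\Id-\underline m(\tensor{M}+\tensor{M}\Transpose)$ positive semidefinite, so its pairing with $X^-$ has the good sign.
\end{itemize}

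\textbf{What each buys.} The paper's route gives an explicit representation that mirrors the Gaussian/ODE structure. However, its derivative computation needs the left and right evolutions to commute with each other and with the spatial operators. This in general fails when $\tensor{M}$ depends on $\Position$. It is the same obstruction you met with $\tensor{S}$; there it is the $\Position$-dependence, not lack of smoothness, that prevents commuting with $\Laplace_{\Position}$. Its final positivity step also needs more than invertibility, since $\Flow$ acts on functions of $\Position$ rather than pointwise; one would use, e.g., positivity of the scalar advection--diffusion kernel plus a splitting argument.

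Your route avoids both issues and works directly for weak solutions under the stated $W^{1,\infty}$ hypothesis. It also gives the quantitative bound $\Covariance\succeq\underline m(\Time)\Id$. Its cost is the chain rule for the $C^{1,1}$ spectral function, which is standard. The map $X\mapsto X^-$ is Lipschitz in the Frobenius norm. Moreover, $\tr(\partial_{x_i}X^-\,\partial_{x_i}X)\le 0$ follows from the Daleckii--Krein formula, because $x\mapsto x^-$ is nonincreasing.

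\textbf{Minor slips in your formal eigenvalue step.} Neither is used by the truncation.
\begin{itemize}
\item Concavity of $\lambda_{\min}$ gives $\Laplace\lambda_{\min}(\Covariance)\le\vec\xi\Transpose(\Laplace\Covariance)\vec\xi$, not $\ge$; this is the direction that makes $m$ a supersolution.
\item The bound $-2m\,\vec\xi\Transpose\tensor{M}\vec\xi\ge-2\|\tensor{M}\|_\infty m$ requires $m\ge 0$.
\end{itemize}
Also, if $\Covariance_0$ is only pointwise rather than uniformly positive definite, you may start from $\underline m(0)=0$. The positive source still gives $\underline m(\Time)>0$ for $\Time>0$.
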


\begin{proof}
    We note that the differential operators of the advection-diffusion-reaction equation act element-wise on $\Covariance(\Time)$, while its right hand side matrix part satisfies $(\Covariance(\Time) \tensor{M}(\Time)\Transpose + \tensor{M}(\Time) \Covariance(\Time))\Transpose = \Covariance(\Time)\Transpose \tensor{M}(\Time)\Transpose + \tensor{M}(\Time) \Covariance(\Time)\Transpose$. Hence, 
    $\PDiff{\Time}(\Covariance(\Time)\Transpose-\Covariance(\Time)) = 0$,
    so that symmetry is preserved.
    To verify positive definiteness, we use the evolution operator $\Flow(\Time, \Time_0)$
    of the linear operator $\tensor{M}_\partial(\Time)$,
    \[
        \tensor{M}_{\partial}(\Time) = -\tensor{M}(\Time) - \frac12  (\Velocity \cdot \GradX)
        + \frac12 \CenterDiffusion \Laplace_{\Position}.
    \]
    The elementwise Neumann Laplacian $A = \frac12 \CenterDiffusion \Laplace_{\Position}$ is a self-adjoint linear operator on the domain $\D = \{\varphi\in H^1(\Omega; \R^{\dm\times\dm}), \ \Delta\varphi\in L^2(\Omega; \R^{\dm\times\dm}),\  \GradX\varphi \cdot \vec{n} = 0\ \text{weakly}\}$, see \cite[\S5.3.3]{Arendt2002}.
    The spectrum is discrete, contained in the left half-axis $]-\infty,0]$, with $0$ as an eigenvalue, so that $A$ generates a $C_0$ semigroup $\exp(As)$ on $L^2(\Omega; \R^{\dm\times\dm})$, which is contractive in the sense that $\|\exp(As)\|\le 1$ for all $s\ge 0$.
    We also observe that for fixed $\Time$, the operator $B(\Time) = -\frac12 (\Velocity(\Time) \cdot \GradX)$ is relatively bounded with respect to the Neumann Laplacian $A$.
    Indeed, by Sobolev embedding \cite[Theorem~7.28]{Gilbarg2001}, for any $a>0$ there exists $b>0$ such that for all $\varphi\in \D$
    \begin{align*}
        \int_\Omega |\Velocity(\Time,\Position) \cdot \GradX\varphi(\Position)|^2 \,\Diff\Position
            &\leq \|\Velocity(\Time,\Position)\|^2_{L^\infty(\Omega)} \int_\Omega |\GradX\varphi(\Position)|^2 \,\Diff\Position \\
            &\leq \|\Velocity(\Time,\Position)\|^2_{L^\infty(\Omega)} \left(a \int_\Omega |\Laplace_{\Position} \varphi(\Position)|^2 \,\Diff\Position + b \int_\Omega|\varphi(\Position)|^2 \,\Diff\Position\right).
    \end{align*}
    Moreover,
    \begin{align*}
        \int_\Omega \varphi(\Position)\, \Velocity(\Time,\Position)\cdot\GradX\varphi(\Position) \,\Diff\Position
        = \int_\Omega \tfrac{1}{2} \DivX \big(\varphi^2(\Position) \Velocity(\Time,\Position)\big) \,\Diff\Position
        = \int_{\partial\Omega} \tfrac{1}{2} \varphi^2 \Velocity(\Time,\Position) \cdot \vec{n} \,\Diff[s] = 0
    \end{align*}
    for all $\varphi\in \D$, so that $B(\Time)$ is dissipative. 
    Then, by unbounded perturbation theory, see \cite[III.2]{Engel2000}, $A+ B(\Time)$ generates a $C_0$ semigroup of contractions $S_{\Time}(s)$, $s\in[0,T]$, on $L^2(\Omega; \R^{\dm\times\dm})$, which due to its contractivity is stable, see \cite[\S5.2]{Pazy1983}.
    Moreover, there exist a constant $K>0$, depending on a uniform bound on $\GradX\Velocity(\Time,\Position)$ and the Deborah number, such that $\|\tensor{M}(\Time)\|\le K$ for all $\Time\in[0,T]$.
    Hence, $\tensor{M}_\partial(\Time) = A + B(\Time) - \tensor{M}(\Time)$ is a stable family of generators with time independent domain $\D$, see \cite[\S5.2]{Pazy1983}. 
    By \cite[\S5.4, Theorem~4.8]{Pazy1983}, we then have a well-defined evolution operator $\Flow(\Time,\Time_0)$. 
    It satisfies for all $\Covariance_0\in \D$
    \[
        \PDiff{\Time}\Flow(\Time,\Time_0)\Covariance_0 = 
        \tensor{M}_\partial(\Time)\Flow(\Time,\Time_0)\Covariance_0,\qquad \Flow(\Time_0,\Time_0)\Covariance_0 = \Covariance_0.
    \]
    We also consider the transpose operator $\tensor{M}_\partial(\Time)\Transpose = A + B(\Time) - \tensor{M}(\Time)\Transpose$, which acts with the differential operators $A$ and $B(t)$ on any matrix entry and multiplies with the matrix $\tensor{M}(\Time)\Transpose$ from the right.
    The evolution operator $\Flow(\Time,\Time_0)\Transpose$ of $\tensor{M}_\partial(\Time)\Transpose$ is the transpose of $\Flow(\Time,\Time_0)$ in the sense that $(\Flow(\Time,\Time_0)\Covariance_0)\Transpose = \Covariance_0\Flow(\Time,\Time_0)\Transpose$ for all $\Covariance_0\in \D$.
    Moreover, 
    \[
        \PDiff{\Time}\Covariance_0 \Flow(\Time,\Time_0)\Transpose = \Covariance_0\Flow(\Time,\Time_0)\Transpose\tensor{M}_\partial(\Time)\Transpose,\qquad \Covariance_0\Flow(\Time_0,\Time_0)\Transpose = \Covariance_0.
    \]
    The two evolutions allow to represent the advection-diffusion-reaction solution $\Covariance(t)$ in a quadratic way as $\Covariance(\Time) = \widetilde\Covariance(\Time)$ with
    \[
        \widetilde\Covariance(\Time) = \Flow(\Time,0) \Covariance_0\Flow(\Time,0)\Transpose 
            + \frac{1}{\Deborah} \int_0^\Time \Flow(\Time,s) (\tensor{\Lambda} \otimes \Id_d) \Flow(\Time,s)\Transpose \,\Diff[s].
    \]
    Indeed, $\widetilde\Covariance(0) = \Covariance_0$ and
    \begin{align*}
        \PDiff{\Time}\widetilde\Covariance(\Time) &= 
        \tensor{M}_\partial(\Time)\Flow(\Time,0) \Covariance_0 \Flow(\Time,0)\Transpose
         + \Flow(\Time,0) \Covariance_0 \Flow(\Time,0)\Transpose\tensor{M}_\partial(\Time)\Transpose + 
         \frac{1}{\Deborah} (\tensor{\Lambda} \otimes \Id_d) \\
         &\quad+ \frac{1}{\Deborah} \int_0^t \Big(\tensor{M}_\partial(\Time)\Flow(\Time,s) (\tensor{\Lambda} \otimes \Id_d)\Flow(\Time,s)\Transpose
         + \Flow(\Time,s) (\tensor{\Lambda} \otimes \Id_d) \Flow(\Time,s)\Transpose \tensor{M}_\partial(\Time)\Transpose \Big)\Diff[s]\\
         &= \tensor{M}_\partial(\Time) \widetilde\Covariance(\Time) + 
         \widetilde\Covariance(\Time)\tensor{M}_\partial(\Time)\Transpose 
         + \frac{1}{\Deborah}(\tensor{\Lambda} \otimes \Id_d).
    \end{align*}
    Using the representation for the quadratic form associated with the matrix $\Covariance(\Time)$, we obtain for any $\vec{v} \in \R^{\dm}$, $\vec{v}\neq0$,
    \begin{align*}
        \vec{v}\Transpose \Covariance(\Time) \vec{v}
        = \vec{v}\Transpose \Flow(\Time,0) \Covariance_0 \Flow(\Time,0)\Transpose \vec{v}
            + \frac{1}{\Deborah} \int_0^\Time \vec{v}\Transpose \Flow(\Time,s) (\tensor{\Lambda} \otimes \Id_d) \Flow(\Time,s)\Transpose \vec{v} \,\Diff[s]
        > 0,
    \end{align*}
    since the evolution operators are invertible for $\Time>0$. 
    Thus, $\Covariance(\Time)$ takes values in the set of symmetric, positive definite matrices.
\end{proof}

\section{Conclusion and future work}
\label{sec:conclusion}
In this work, we have shown equivalence between the classical moment closure of the Hookean linear-chain model and a nonlinear variational approximation based on the Dirac--Frenkel principle on a Gaussian manifold.
This equivalence results from the fact that the Gaussian manifold is invariant under the configurational Fokker--Planck operator associated with linear spring forces.
Thus the variationally projected dynamics coincides with the exact macroscopic closure.
While this equivalence is worked out in full detail for the Hookean linear-chain model, general chains but linear spring forces can be placed into the same abstract framework and involve then the incidence matrix of the associated connected graph.
In the case of FENE models and, more generally, for polymer models with nonlinear forcing laws, this invariance property does not hold any more.
Gaussian probability densities are no longer preserved by the configurational dynamics, and it is well known that exact macroscopic closure relations are not available.
Nevertheless, the variational framework developed in this paper remains applicable and provides a systematic mechanism for constructing reduced models and algorithmic approximations.
In particular, the projected evolution equation gives a closed reduced dynamics on a specific nonlinear approximation manifold.

From an algorithmic point of view, this observation provides an abstract framework to reduced numerical simulation schemes of FENE-type models.
Rather than discretizing the high-dimensional configuration space, e.g., by a spectral method, one can approximate the probability density by a parametrized family of distributions and set the corresponding parameters dynamically.
The reduced evolution equations are then obtained directly from the variational principle.
This pathline guarantees a variational consistent structure allowing for a rich family of alternative methods in situations where no exact closure exists.
Moreover, the derived a posteriori error representation offers a natural tool for assessing the quality of such reduced approximations.
In the non-Hookean setting, the residual term quantifies the modeling error.
This information can be exploited algorithmically, for instance to compare different parametrizations or to set up adaptive strategies monitoring nonlinear effects.
Finally, it is of interest to study the interaction of variationally reduced configurational dynamics with the full micro–macro coupling.
The development and analysis of stable numerical schemes based on nonlinear variational approximations, as well as their long-time behavior for nonlinear polymer models will be investigated in future research work.

\section*{CRediT author statement}

\textbf{C. Lasser:}
    Conceptualization,
    Formal analysis,
    Methodology,
    Writing – original draft,
    Writing – review and editing.

\textbf{S. B. Lunowa:}
    Conceptualization,
    Formal analysis,
    Methodology,
    Writing – original draft,
    Writing – review and editing.

\textbf{B. Wohlmuth:}
    Conceptualization,
    Formal analysis,
    Methodology,
    Writing – original draft,
    Writing – review and editing.

\bibliographystyle{elsarticle-num}
\bibliography{references}

\end{document}